\newcommand{\prob}{Pr}
\newcommand{\expect}[1]{\mathbb{E}\big\{#1\big\}}
\newcommand{\defequiv}{\mbox{\raisebox{-.3ex}{$\overset{\vartriangle}{=}$}}}
\newcommand{\bv}[1]{{\boldsymbol{#1} }}
\newcommand{\script}[1]{{{\cal{#1} }}}
\begin{document}
\title{LIFO-Backpressure Achieves Near Optimal Utility-Delay Tradeoff}

\author{\large{Longbo Huang, Scott Moeller, Michael J. Neely, Bhaskar Krishnamachari}%
\thanks{Longbo Huang, Scott Moeller, Michael J. Neely, and Bhaskar Krishnamachari   (emails: \{longbohu, smoeller, mjneely, bkrishna\}@usc.edu)
are with the Department of Electrical
Engineering, University of Southern California, Los Angeles, CA 90089, USA.}
\thanks{This material is supported in part  under one or more of 
the following grants: DARPA IT-MANET 
W911NF-07-0028, 
NSF CAREER CCF-0747525, and continuing through participation in the 
Network Science Collaborative Technology Alliance sponsored
by the U.S. Army Research Laboratory.} }
\maketitle

\thispagestyle{empty}
\newtheorem{rem}{Remark}
\newtheorem{fact_def}{\textbf{Fact}}
\newtheorem{coro}{\textbf{Corollary}}
\newtheorem{lemma}{\textbf{Lemma}}
\newtheorem{main}{\textbf{Proposition}}
\newtheorem{thm}{\textbf{Theorem}}
\newtheorem{claim}{\emph{Claim}}
\newtheorem{prop}{Proposition}
\newtheorem{assumption}{\textbf{Assumption}}

 \begin{abstract}
There has been considerable recent work developing a new stochastic network utility maximization framework using Backpressure algorithms, also known as MaxWeight.  A key open problem has been the development of utility-optimal algorithms that are also delay efficient.  In this paper, we show that the Backpressure  algorithm, when combined with the LIFO queueing discipline (called LIFO-Backpressure), is able to achieve a utility that is within $O(1/V)$ of the optimal value for any scalar $V\geq1$, while maintaining an average delay of $O([\log(V)]^2)$ for all but a tiny fraction of the network traffic. 
This result holds for general stochastic network optimization problems and  general Markovian dynamics. 
Remarkably, the performance of LIFO-Backpressure can be achieved by simply changing the queueing discipline; it requires no other modifications of the original Backpressure algorithm.
We validate the results through empirical measurements from a sensor network testbed, which show good match between theory and practice.

 \end{abstract}

\begin{keywords}
Queueing, Dynamic Control, LIFO scheduling, Lyapunov analysis,  Stochastic Optimization
\end{keywords}

\section{Introduction}

Recent developments in stochastic network optimization theory have yielded a very general framework that solves a large class of networking problems of the following form: 
We are given a discrete time stochastic network. The network state, which describes current realization of the underlying network randomness, such as the network channel condition, is time varying according to some probability law. A network controller performs some action based on the observed network state at every time slot. The chosen action incurs a cost, \footnote{Since cost minimization is mathematically equivalent to utility maximization, below we will use cost and utility interchangeably} 
  but also serves some amount of traffic and possibly generates new traffic for the network.  This traffic causes congestion, and thus leads to backlogs at nodes in the network. The goal of the controller is to minimize its time average cost subject to the constraint that the time average total backlog in the network be kept finite. 

This general setting models a large class of networking problems ranging from traffic routing \cite{tassiulas92}, 
flow utility maximization \cite{eryilmaz_qbsc_ton07}, 
network pricing \cite{huangneelypricing} to cognitive radio applications \cite{rahulneelycognitive}. Also,  many techniques have also been applied to this problem (see \cite{yichiang_netopt08} for a survey).  
Among the approaches that have been adopted, the family of Backpressure algorithms \cite{neelynowbook} are recently receiving much attention due to their provable performance guarantees, robustness to stochastic network conditions and, most importantly, their ability to achieve the desired performance \emph{without requiring any statistical knowledge} of the underlying randomness in the network.  



Most prior performance results for Backpressure are given in the following $[O(1/V), O(V)]$ utility-delay tradeoff form \cite{neelynowbook}: Backpressure is able to achieve a utility that is within $O(1/V)$ of the optimal utility for any scalar $V\geq1$, while guaranteeing a average network delay that is  $O(V)$. Although these results provide strong theoretical guarantees for the algorithms, the network delay can actually be unsatisfying when we achieve a utility that is very close to the optimal, i.e., when $V$ is large. 

There have been previous works trying to develop algorithms that can achieve better utility-delay tradeoffs. 
Previous works \cite{neelysuperfast} and \cite{neelyenergydelay} show improved tradeoffs are possible
for single-hop networks with certain structure, and develops
optimal $[O(1/V), O(\log(V))]$and $[O(1/V), O(\sqrt{V})]$ utility-delay tradeoffs.  
However, the algorithms are different from basic Backpressure and require knowledge of an ``epsilon'' parameter that measures distance to a performance
region boundary. 
Work  \cite{huangneely_dr_tac} uses a completely different
analytical technique to show that similar poly-logarithmic tradeoffs, i.e., $[O(1/V), O([\log(V)]^2)]$, are possible by carefully modifying the actions taken by the basic Backpressure algorithms. 
However, the algorithm requires a pre-determined learning phase, which adds additional complexity to the implementation. 
The current work, following the line of analysis in \cite{huangneely_dr_tac}, instead shows  that similar poly-logarithmic tradeoffs, i.e., $[O(1/V), O([\log(V)]^2)]$, 
can be achieved by the \emph{original} Backpressure algorithm by simply
modifying the service discipline from First-in-First-Out (FIFO) to 
Last-In-First-Out (LIFO) (called LIFO-Backpressure below).   This is a remarkable feature that distinguishes LIFO-Backpressure from previous algorithms in \cite{neelysuperfast}  \cite{neelyenergydelay} \cite{huangneely_dr_tac}, and 
provides a deeper understanding of 
backpressure itself, and the role of queue backlogs as Lagrange multipliers
(see also  \cite{eryilmaz_qbsc_ton07} \cite{huangneely_dr_tac}).  However, this performance improvement 
is not for free: We must  \emph{drop a small fraction of packets} in order to dramatically
improve delay for the remaining ones.  We prove that as the $V$ parameter is
increased, the fraction of dropped packets quickly converges to zero, while
maintaining $O(1/V)$ close-to-optimal utilitiy and $O([\log(V)]^2)$ average backlog. 
This provides an analytical justification for experimental observations 
in \cite{scott_lifo_ipsn} that shows a related LIFO-Backpressure rule serves up to $98\%$ of the traffic with delay that is improved by 2 orders of magnitude.

LIFO-Backpressure was proposed in recent empirical work \cite{scott_lifo_ipsn}. The authors developed a practical implementation of backpressure routing and showed experimentally that applying LIFO queuing discipline drastically improves average packet delay, but did not provide theoretical guarantees. 
Another notable recent work providing an alternative delay solution is  \cite{athanasopoulouTON10}, which describes a novel backpressure-based per-packet randomized routing framework that runs atop the shadow queue structure of \cite{buimulticast} while minimizing hop count as explored in \cite{buisrikant_infocom09}.  Their techniques reduce delay drastically and eliminates the per-destination queue complexity, but does not provide $O([\log(V)]^2)$ average delay guarantees.



Our analysis of the delay performance of LIFO-Backpressure is based on the recent ``exponential attraction'' result developed in \cite{huangneely_dr_tac}. The proof  idea can be intuitively explained by Fig. \ref{fig:lifo-idea}, which depicts a simulated backlog process of a single queue system with unit packet size under Backpressure.  
\begin{figure}[cht]
\centering
\includegraphics[height=1.4in, width=2.1in]{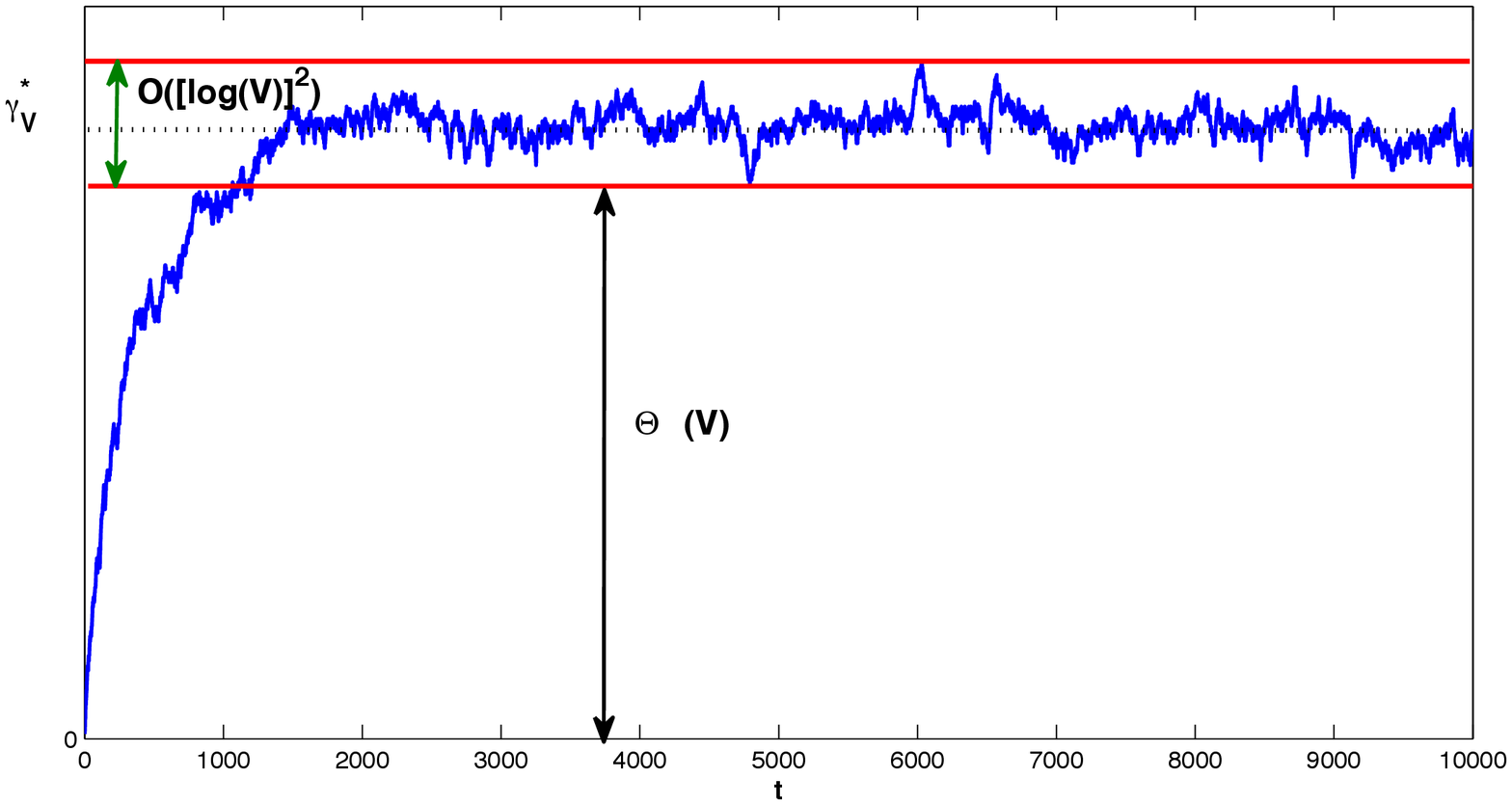}
\includegraphics[height=1.4in, width=1.2in]{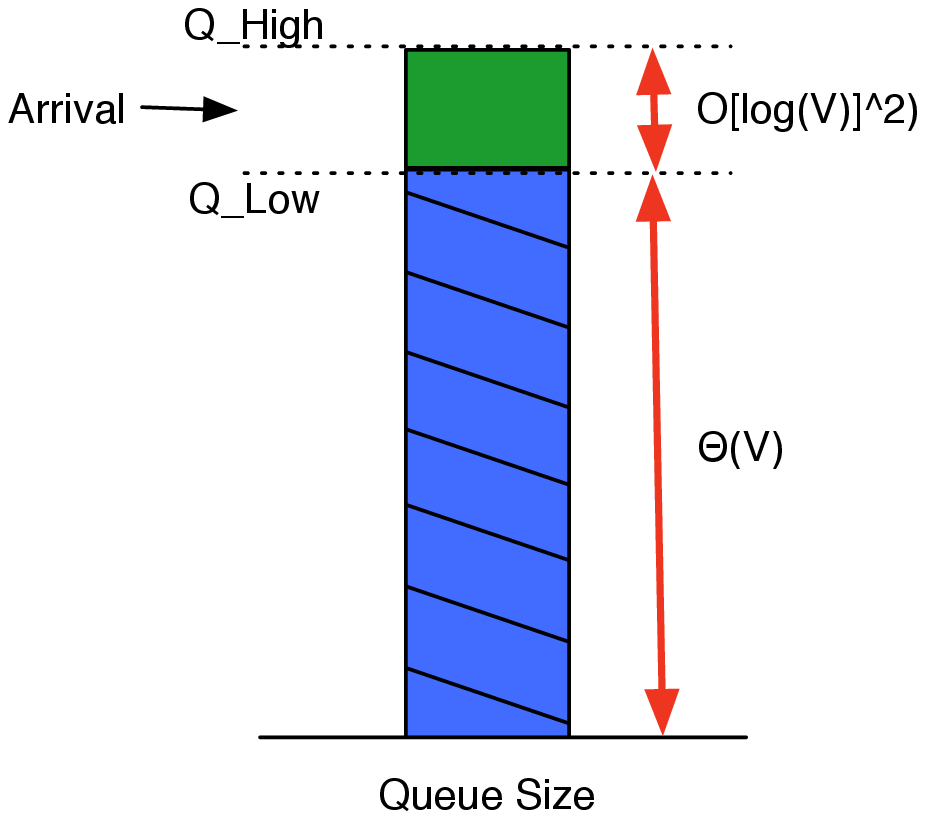}
\caption{The LIFO-Backpressure Idea}\label{fig:lifo-idea}
\end{figure}
The left figure demonstrates the  ``exponential attraction'' result in \cite{huangneely_dr_tac}, which states that queue sizes under Backpressure deviate from some fixed point with probability that decreases exponentially in the deviation distance. 
Hence the queue size will mostly fluctuate within the interval $[Q_{Low}, Q_{High}]$ which can be shown to be of $O([\log(V)]^2)$ size. This result holds under both FIFO and LIFO, as they result in the same queue process. 
Now suppose LIFO is used in this queue. Then from the right figure, we see that most of the packets will arrive at the queue when the queue size is between $Q_{Low}$ and $Q_{High}$, and  these new packets will always be placed on the top of the queue due to the LIFO discipline. Most packets thus enter and leave the queue when the queue size is between $Q_{Low}$ and $Q_{High}$. Therefore, these packets ``see'' a queue with average size no more than $Q_{High}-Q_{Low}=O([\log(V)]^2)$. 
Now let $\lambda$ be the packet arrival rate into the queue, and let 
$\tilde{\lambda}$ be the arrival rate of packets entering when the queue size is in $[Q_{Low}, Q_{High}]$ and that eventually depart.  
Because packets always occupy the same buffer slot under LIFO, we see that these packets can occupy at most $Q_{High}-Q_{Low}+\delta_{max}$ buffer slots, ranging from $Q_{Low}$ to $Q_{High}+\delta_{max}$, where $\delta_{max}=\Theta(1)$ is the maximum number of packets that can enter the queue at any time. 
We can now apply Little's Theorem  \cite{bertsekas_datanet} to the buffer slots in the interval $[Q_{Low}, Q_{High}+\delta_{max}]$, and we see that average delay for these packets that arrive when the queue size is in $[Q_{Low}, Q_{High}]$ satisfies: 
\begin{eqnarray}
D \leq \frac{Q_{High}-Q_{Low}+\delta_{max}}{\tilde{\lambda}} = \frac{O([\log(V)]^2)}{\tilde{\lambda}}. \label{eq:lifo-demo}
\end{eqnarray}

Finally, the exponential attraction result implies that $\lambda\approx\tilde{\lambda}$. Hence for almost all packets entering the queue, the average delay is $D=O([\log(V)]^2/\lambda)$. 


This paper is organized as follows. In Section \ref{section:notation}, we set up our notations. We then present our system model in Section \ref{section:model}. We provide an example of our network in Section \ref{section:example}. 
We review the Backpressure algorithm in Section \ref{section:qlareview}.  The delay performance of LIFO-Backpressure is presented in Section \ref{section:LIFO-Backpressure-analysis}. Simulation results are presented in Section \ref{section:simulation}. We then also present experimental testbed results in Section \ref{section:experiment}. Finally, we comment on optimizing a function of time averages in Section \ref{section:opt-timeavg}.

$\vspace{-.3in}$
\section{Notations}\label{section:notation}
Here we first set up the notations used in this paper:
$\mathbb{R}$ represents  the set of real numbers. 
$\mathbb{R}_+$ (or $\mathbb{R}_-$) denotes the set of nonnegative (or non-positive) real numbers. 
$\mathbb{R}^n$ (or $\mathbb{R}^n_+$) is  the set of $n$ dimensional \emph{column} vectors, with each element being in $\mathbb{R}$ (or $\mathbb{R}_+$). 
\textbf{bold} symbols $\bv{a}$ and $\bv{a}^T$ represent \emph{column} vector and its transpose. 
$\bv{a}\succeq\bv{b}$ means vector $\bv{a}$ is entrywise no less than vector $\bv{b}$. 
$||\bv{a}-\bv{b}||$ is the Euclidean distance of $\bv{a}$ and $\bv{b}$. 
$\bv{0}$ and $\bv{1}$ denote column vector with all elements being $0$ and $1$. 
$[a]^+=\max[a, 0]$ and $\log(\cdot)$ is the natural log.



$\vspace{-.2in}$
\section{System Model}\label{section:model}
In this section, we specify the general network model we use. We consider a network controller that operates a network with the goal of minimizing the time average cost, subject to the queue stability constraint. The network is assumed to operate in slotted time, i.e., $t\in\{0,1,2,...\}$. We assume there are $r\geq1$ queues in the network. 

$\vspace{-.22in}$
\subsection{Network State}
In every slot $t$, we use $S(t)$ to denote the current network state, which indicates the current network parameters, such as a vector of channel conditions for each link, or a collection of other relevant information about the current network channels and arrivals. 
We assume that $S(t)$ evolves according a finite state irreducible and aperiodic Markov chain, with a total of $M$ different random network states denoted as $\script{S} = \{s_1, s_2, \ldots, s_M\}$.  Let $\pi_{s_i}$ denote the steady state probability of being in state $s_i$. It is easy to see in this case that $\pi_{s_i}>0$ for all $s_i$. The network controller can observe $S(t)$ at the beginning of every slot $t$, but the $\pi_{s_i}$ and transition probabilities are not necessarily known.

$\vspace{-.22in}$
\subsection{The Cost, Traffic, and Service}\label{subsection:costtrafficservice}
At each time $t$, after observing $S(t)=s_i$, the controller chooses an action $x(t)$ from a set $\script{X}^{(s_i)}$, i.e., $x(t)= x^{(s_i)}$ for some $x^{(s_i)}\in\script{X}^{(s_i)}$. The set $\script{X}^{(s_i)}$ is called the feasible action set for network state $s_i$ and is assumed to be time-invariant and compact for all $s_i\in\script{S}$.  The cost, traffic, and service generated by the chosen action $x(t)=x^{(s_i)}$ are as follows:
\begin{enumerate}
\item[(a)] The chosen action has an associated cost given by the cost function $f(t)=f(s_i, x^{(s_i)}): \script{X}^{(s_i)}\mapsto \mathbb{R}_+$ (or $\script{X}^{(s_i)}\mapsto\mathbb{R}_-$ in reward maximization problems);

\item[(b)] The amount of traffic generated by the action to queue $j$ is determined by the traffic function $A_j(t)=A_{j}(s_i, x^{(s_i)}): \script{X}^{(s_i)}\mapsto \mathbb{R}_{+}$, in units of packets; 

\item[(c)] The amount of service allocated to queue $j$ is given by the rate function $\mu_j(t)=\mu_{j}(s_i, x^{(s_i)}): \script{X}^{(s_i)}\mapsto \mathbb{R}_{+}$, in units of packets;

 \end{enumerate}
Note that $A_j(t)$ includes both the exogenous arrivals from outside the network to queue $j$, and the endogenous arrivals from other queues, i.e., the transmitted packets from other queues, to queue $j$. We assume the functions $f(s_i, \cdot)$, $\mu_{j}(s_i, \cdot)$ and $A_{j}(s_i, \cdot)$ are continuous, time-invariant, their magnitudes are uniformly upper bounded by some constant $\delta_{max}\in(0,\infty)$ for all $s_i$, $j$, and they are known to the network operator. We also assume that there exists a set of actions $\{x^{(s_i)k}\}_{i=1,..., M}^{k=1,2, ..., \infty}$ with $x^{(s_i)k}\in\script{X}^{(s_i)}$ and some variables $\vartheta^{(s_i)}_k\geq0$ for all $s_i$ and $k$ with $\sum_k\vartheta^{(s_i)}_k=1$ for all $s_i$, such that 
\begin{eqnarray}
\sum_{s_i}\pi_{s_i}\big\{\sum_k\vartheta^{(s_i)}_k[A_{j}(s_i, x^{(s_i)k})-\mu_{j}(s_i, x^{(s_i)k})]\big\}\leq -\eta, \label{eq:slackness}
\end{eqnarray}
for some $\eta>0$ for all $j$. That is, the stability 
constraints are feasible with $\eta$-slackness. Thus, there exists a stationary randomized policy that stabilizes all queues (where $\vartheta^{(s_i)}_k$ represents the probability of choosing action $x^{(s_i)k}$ when $S(t)=s_i$) \cite{neelynowbook}.  


$\vspace{-.2in}$
\subsection{Queueing, Average Cost, and the Stochastic Problem}\label{section:queuenotation}
Let $\bv{q}(t)=(q_1(t), ..., q_r(t))^T\in\mathbb{R}^r_{+}$, $t=0, 1, 2, ...$ be the queue backlog vector  process of the network, in units of packets. We assume the following queueing dynamics: 
\begin{eqnarray}
q_j(t+1)=\max\big[q_j(t)-\mu_j(t), 0\big]+A_j(t)\quad\forall j,\label{eq:queuedynamic}
\end{eqnarray}
and $\bv{q}(0)=\bv{0}$. By using (\ref{eq:queuedynamic}), we assume that when a queue does not have enough packets to send, null packets are transmitted.  In this paper, we adopt the following notion of queue stability:
\begin{eqnarray}
\expect{\sum_{j=1}^rq_j}\triangleq
\limsup_{t\rightarrow\infty}\frac{1}{t}\sum_{\tau=0}^{t-1}\sum_{j=1}^{r}\expect{q_j(\tau)}<\infty.\label{eq:queuestable}
\end{eqnarray}
We also use $f^{\Pi}_{av}$ to denote the time average cost induced by an action-choosing policy $\Pi$, defined as:
\begin{eqnarray}
f^{\Pi}_{av}\triangleq
\limsup_{t\rightarrow\infty}\frac{1}{t}\sum_{\tau=0}^{t-1}\expect{f^{\Pi}(\tau)},\label{eq:timeavcost}
\end{eqnarray}
where $f_{av}^{\Pi}(\tau)$ is the cost incurred at time $\tau$ by policy $\Pi$. We call an action-choosing  policy \emph{feasible} if at every time slot $t$ it only chooses actions from the feasible action set $\script{X}^{(S(t))}$.  We then call a feasible action-choosing  policy under which (\ref{eq:queuestable}) holds a \emph{stable} policy, and use $f_{av}^*$ to denote the optimal time average cost over all stable policies. 
In every slot, the network controller observes the current network state and chooses a control action, with the goal of minimizing the time average cost subject to network stability. This goal can be mathematically stated as:
\textbf{(P1)}\,\,\, $\bv{\min: \,  f^{\Pi}_{av}, \,\, s.t.\,  (\ref{eq:queuestable})}$. 
In the following, we will refer to \textbf{(P1)} as \emph{the stochastic problem}. 

Note that in some network optimization problems, e.g., \cite{neelyfairness}, the objective of the network controller is to optimize a function of a time average metric. In this case, we see that the Backpressure algorithm and the deterministic problem presented in the next section can similarly be constructed, but will be slightly different. 
We will discuss these problems in Section \ref{section:opt-timeavg}. 

\vspace{-.1in}
\section{An example of our model}\label{section:example}
Here we provide an example to illustrate our model. Consider the $2$-queue network in Fig. \ref{fig:net-example}. In every slot, the network operator decides whether or not to allocate one unit of power to serve packets at each queue, so as to support all arriving traffic, i.e., maintain queue stability, with minimum energy expenditure. 
We assume the network state $S(t)$, which is the quadruple $(R_1(t), R_2(t), CH_1(t), CH_2(t))$, evolves according to the finite state Markov chain with three states $s_1=(1, 1, G, B), s_2=(1, 1, G, G)$, and $s_3=(0, 0, B, G)$. Here $R_i(t)$ denotes the number of exogenous packet arrivals to queue $i$ at time $t$, and $CH_i(t)$ is the state of channel $i$. $R_i(t)=x$ implies that there are $x$ number of packets arriving at queue $i$ at time $t$. $CH_i(t)=G/B$ means that channel $i$ has a ``Good'' or ``Bad'' state. When a link's  channel state is ``Good'', one unit of power can serve $2$ packets over the link, otherwise it can only serve one. We assume power can be allocated to both channels without affecting each other. 



\begin{figure}[cht]
\centering
\includegraphics[height=0.7in, width=2.5in]{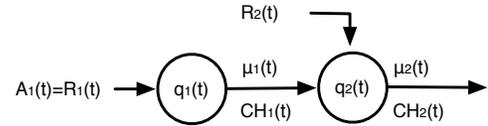}
\caption{A two queue tandem example.}\label{fig:net-example}
\end{figure}


In this case, we see that there are three possible network states. At each state $s_i$, the action $x^{(s_i)}$ is a pair $(x_1, x_2)$, with $x_i$ being the amount of energy spent at queue $i$, and $(x_1, x_2)\in\script{X}^{(s_i)}=\{0/1,0/1\}$. The cost function is  $f(s_i, x^{(s_i)})=x_1+x_2$, for all $s_i$. The network states, the traffic functions,  and the service rate functions are summarized in Fig. \ref{fig:net-function}. Note here $A_1(t)=R_1(t)$ is part of $S(t)$ and is independent of $x^{(s_i)}$; while $A_2(t)=\mu_1(t)+R_2(t)$ hence depends on $x^{(s_i)}$. Also note that $A_2(t)$ equals $\mu_1(t)+R_2(t)$ instead of $\min[\mu_1(t), q_1(t)]+R_2(t)$ due to our idle fill assumption in Section \ref{section:queuenotation}. 

\begin{figure}[cht]
\centering
\includegraphics[height=0.7in, width=3.5in]{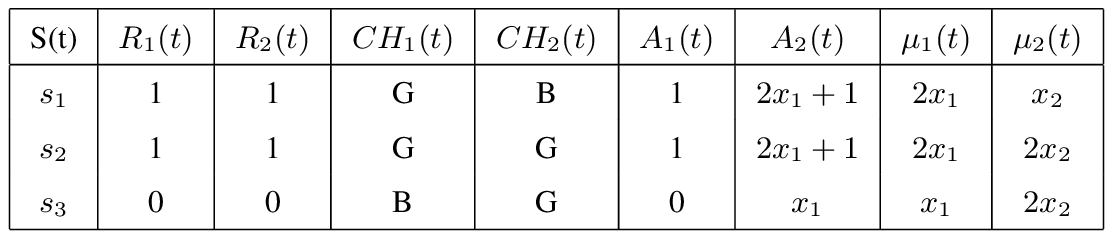}
\caption{The traffic and service functions under different states.}\label{fig:net-function}
\end{figure}

\vspace{-.1in}
\section{Backpressure and the Deterministic Problem}\label{section:qlareview}
In this section, we first review the Backpressure algorithm \cite{neelynowbook} for solving the stochastic problem. Then we define the \emph{deterministic problem} and its dual. 
We first recall the Backpressure algorithm for utility optimization problems \cite{neelynowbook}. 

\underline{\emph{Backpressure:}} At every time slot $t$, observe the current network state $S(t)$ and the backlog $\bv{q}(t)$. If $S(t)=s_i$, choose $x^{(s_i)}\in\script{X}^{(s_i)}$ that solves the following: 
\vspace{-.15in}
\begin{eqnarray}
\hspace{-.3in}\max: && -Vf(s_i, x)+\sum_{j=1}^{r}q_j(t)\big[\mu_j(s_i, x)-A_j(s_i, x)\big]\label{eq:QLAeq}\\
s.t. && x\in\script{X}^{(s_i)}.\nonumber
\end{eqnarray}
Depending on the problem structure, (\ref{eq:QLAeq}) can usually be decomposed into separate parts that are easier to solve, e.g., \cite{huangneelypricing}, \cite{rahulneelycognitive}. 
Also, when the network state process $S(t)$ is i.i.d., it has been shown in \cite{neelynowbook} that, 
\begin{eqnarray}
f_{av}^{BP}=f^*_{av}+O(1/V),\quad \overline{q}^{BP}=O(V),\label{eq:qla_performance}
\end{eqnarray}
where $f_{av}^{BP}$ and $\overline{q}^{BP}$ are the expected average cost and the expected average network backlog size under Backpressure, respectively. 
When $S(t)$ is Markovian, \cite{huangneelypricing} and \cite{rahulneelycognitive} show that Backpressure achieves an $[O(\log(V)/V), O(V)]$ utility-delay tradeoff if the queue sizes are deterministically upper bounded by $\Theta(V)$ for all time. Without this deterministic backlog bound, it has recently been shown that Backpressure achieves an $[O(\epsilon+\frac{T_{\epsilon}}{V}), O(V)]$ tradeoff under Markovian $S(t)$, with $\epsilon$ and $T_{\epsilon}$ representing the proximity to the optimal utility value and the ``convergence time'' of the Backpressure algorithm to that proximity \cite{neely_queuestability10}. However, there has not been any formal proof that shows the exact $[O(1/V), O(V)]$ utility-delay tradeoff of Backpressure under a Markovian $S(t)$. 



We also recall  \emph{the deterministic problem} defined in \cite{huangneely_dr_tac}:
\begin{eqnarray}
\min:&&\script{F}(\bv{x})\triangleq V\sum_{s_i}\pi_{s_i}f(s_i, x^{(s_i)})\label{eq:primal}\\
s.t.&&\script{A}_j(\bv{x})\triangleq\sum_{s_i}\pi_{s_i}A_j(s_i, x^{(s_i)})\nonumber\\
&&\qquad\qquad\quad\leq \script{B}_j(\bv{x})\triangleq\sum_{s_i}\pi_{s_i}\mu_j(s_i, x^{(s_i)}),\,\,\forall\, j,\nonumber\\
&& x^{(s_i)}\in \script{X}^{(s_i)}\quad \forall\, i=1, 2, ..., M,\nonumber
\end{eqnarray}
where $\pi_{s_i}$ corresponds to the steady state probability of $S(t)=s_i$ and $\bv{x}=(x^{(s_1)}, ..., x^{(s_M)})^T$. The dual problem of (\ref{eq:primal}) can be obtained as follows:
\begin{eqnarray}
\max:\,\,\, g(\bv{\gamma}),\quad s.t.\,\,\, \bv{\gamma}\succeq\bv{0},\label{eq:dualproblem}
\end{eqnarray}
where $g(\bv{\gamma})$ is called the dual function and is defined as:
\begin{eqnarray}
\hspace{-.3in}&&g(\bv{\gamma})=\inf_{x^{(s_i)}\in \script{X}^{(s_i)}}\sum_{s_i}\pi_{s_i}\bigg\{Vf(s_i, x^{(s_i)})\label{eq:dual_separable}\\
\hspace{-.3in}&&\qquad\qquad\qquad\qquad+\sum_j\gamma_j\big[A_j(s_i, x^{(s_i)})- \mu_j(s_i, x^{(s_i)})\big]\bigg\}.\nonumber
\end{eqnarray}

Here $\bv{\gamma}=(\gamma_1, ..., \gamma_r)^T$ is the \emph{Lagrange multiplier} of (\ref{eq:primal}). It is well known that $g(\bv{\gamma})$ in (\ref{eq:dual_separable}) is concave in the vector $\bv{\gamma}$, and hence the problem (\ref{eq:dualproblem}) can usually be solved efficiently, particularly when cost functions and rate functions are separable over different network components. Below, we use $\bv{\gamma}_V^*=(\gamma^*_{V1}, \gamma^*_{V2}, ..., \gamma^*_{Vr})^T$ to denote an optimal solution of the problem (\ref{eq:dualproblem}) with the corresponding $V$.

\vspace{-.1in}
\section{Performance of LIFO Backpressure}\label{section:LIFO-Backpressure-analysis}
In this section, we analyze the performance of Backpressure with the LIFO queueing discipline (called LIFO-Backpressure). The idea of using LIFO under Backpressure is first proposed in \cite{scott_lifo_ipsn}, although they did not provide any theoretical performance guarantee. We will show, under some mild conditions (to be stated in Theorem \ref{thm:prob_multi_con}), that under LIFO-Backpressure, the time average delay for almost all packets entering the network is $O([\log(V)]^2)$ when the utility is pushed to within $O(1/V)$ of the optimal value. Note that the implementation complexity of LIFO-Backpressure is the same as the original Backpressure, and LIFO-Backpressure only requires the knowledge of the instantaneous network condition. This is a remarkable feature that distinguishes it from the previous algorithms  achieving similar poly-logarithmic tradeoffs in the i.i.d. case, e.g., \cite{neelysuperfast} \cite{neelyenergydelay} \cite{huangneely_dr_tac}, which all require knowledge of some implicit network parameters other than the instant network state. 
Below,  we first provide a simple example to demonstrate the need for careful treatment of the usage of LIFO in Backpressure algorithms, and then present a modified Little's theorem that will be used for our proof.

\vspace{-.1in}
\subsection{A simple example on the LIFO delay}
Consider a slotted system where two packets arrive at time $0$, and one packet periodically arrives every slot thereafter
(at times $1, 2, 3,\ldots$). The system is initially empty and can serve exactly one packet per slot.  The arrival rate $\lambda$
is clearly $1$ 
packet/slot (so that $\lambda = 1$).  Further, under either FIFO or LIFO service, there are always 2 packets in the system, 
so $\overline{Q} = 2$. 

Under FIFO service, 
the first packet has a delay of $1$ and all packets thereafter have a delay of $2$:
\[ W_1^{FIFO} = 1 \: , \: W_i^{FIFO} = 2 \: \: \forall i \in \{2, 3, 4, \ldots\}, \]
where $W_i^{FIFO}$ is the delay of the $i^{th}$ packet under FIFO ($W_i^{LIFO}$ is similarly defined for LIFO). 
We thus have: 
\[ \overline{W}^{FIFO} \defequiv \lim_{K\rightarrow\infty} \frac{1}{K}\sum_{i=1}^KW_i^{FIFO} = 2. \]
Thus,  $\lambda \overline{W}^{FIFO} = 1\times 2 = 2$, $\overline{Q} = 2$, and so $\lambda\overline{W}^{FIFO} = \overline{Q}$
indeed holds. 

Now consider the same system under LIFO service.  We still have $\lambda =1$, $\overline{Q}=2$. 
However, in this case the first packet never departs, while all other packets have a
delay equal to $1$ slot: 
\[ W_1^{LIFO} = \infty \: , \: W_i^{LIFO} = 1 \: \: \forall i \in \{2, 3, 4, \ldots\}. \]
Thus, for all integers $K>0$: 
\[ \frac{1}{K}\sum_{i=1}^K W_i^{LIFO} = \infty. \]
and so $\overline{W}^{LIFO} = \infty$.  Clearly $\lambda \overline{W}^{LIFO} \neq \overline{Q}$. 
 On the other hand, if we 
ignore the one packet with infinite delay, we note that all other packets get a delay of 1 (exactly half the delay in the FIFO
system).  Thus, in this example, LIFO service  significantly improves delay for all but the first packet. 

For the above LIFO example, it is interesting to note that
if we define $\tilde{Q}$ and $\tilde{W}$ as the average backlog and delay
\emph{associated only with those packets that eventually depart}, then we have $\tilde{Q} = 1$, $\tilde{W}=1$, and the
equation $\lambda \tilde{W} = \tilde{Q}$ indeed holds.  This motivates the theorem in the next subsection,   which 
considers a time average only over those packets that eventually depart. 

\vspace{-.1in}
\subsection{A Modified Little's Theorem for LIFO systems} \label{section:LIFO-little}
We now present the modified Little's theorem. 
Let $\script{B}$ represent a finite set of buffer locations for a LIFO queueing system. 
Let $N(t)$ be the number of arrivals that use a buffer
location within set $\script{B}$ up to time $t$.  Let $D(t)$ be the number of departures from a buffer location within the set 
$\script{B}$ up to time $t$.  Let $W_i$ be the delay of the $i$th job to depart from the set $\script{B}$.  Define $\overline{W}$ as the
$\limsup$ average delay \emph{considering only those jobs that depart}: 
\[ \overline{W} \defequiv \limsup_{t\rightarrow\infty} \frac{1}{D(t)}\sum_{i=1}^{D(t)} W_i. \]
We then have the following theorem: 
\begin{thm} \label{theorem:little1} 
Suppose there is a constant $\lambda_{min}>0$ such that with probability 1: 
\[ \liminf_{t\rightarrow\infty} \frac{N(t)}{t} \geq \lambda_{min}, \]
Further suppose that $\lim_{t\rightarrow\infty} D(t) = \infty$ with probability 1 (so the number of departures is infinite). 
Then the average delay $\overline{W}$ satisfies: 
\[ \overline{W} \defequiv \limsup_{t\rightarrow\infty} \frac{1}{D(t)}\sum_{i=1}^{D(t)} W_i \leq |\script{B}|/\lambda_{min}, \]
where $|\script{B}|$ is the size of the finite set $\script{B}$. 
\end{thm}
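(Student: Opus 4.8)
The plan is to adapt the classical sample-path proof of Little's theorem, exploiting the fact that under LIFO with fixed buffer slots a job that ever occupies a buffer location in $\script{B}$ remains in that same location continuously until it departs. First I would define $L(\tau)$ to be the number of jobs residing in buffer locations of $\script{B}$ during slot $\tau$; since $\script{B}$ is finite, we have the deterministic bound $L(\tau)\leq|\script{B}|$ for every $\tau$.

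The heart of the argument is the inequality
\begin{eqnarray}
\sum_{i=1}^{D(t)}W_i \;\leq\; \sum_{\tau=0}^{t-1}L(\tau) \;\leq\; |\script{B}|\,t. \nonumber
\end{eqnarray}
For the first inequality, view $\sum_{\tau=0}^{t-1}L(\tau)$ as counting the (job, slot) incidences in which a job occupies a slot of $\script{B}$ during a slot in $\{0,\ldots,t-1\}$. Each of the $D(t)$ jobs that has departed from $\script{B}$ by time $t$ contributes exactly $W_i$ such incidences, because its sojourn in $\script{B}$ is a single contiguous interval of length $W_i$ (this is precisely where the LIFO structure enters), while jobs still present at time $t$ contribute a nonnegative amount; the second inequality is just $L(\tau)\leq|\script{B}|$ summed over $t$ slots.

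Next I would bound $D(t)$ from below: every job counted in $D(t)$ was previously counted in $N(t)$, and the jobs counted in $N(t)$ but not yet in $D(t)$ are precisely those still residing in $\script{B}$, of which there are at most $|\script{B}|$; hence $D(t)\geq N(t)-|\script{B}|$. Dividing by $t$ and invoking the hypothesis $\liminf_{t\to\infty}N(t)/t\geq\lambda_{min}$ gives $\liminf_{t\to\infty}D(t)/t\geq\lambda_{min}$ with probability $1$, hence $\limsup_{t\to\infty}t/D(t)\leq 1/\lambda_{min}$. Combining this with the displayed inequality, divide by $D(t)$ (which is positive for large $t$ and tends to $\infty$ by assumption) to get $\frac{1}{D(t)}\sum_{i=1}^{D(t)}W_i\leq|\script{B}|\,t/D(t)$, and take $\limsup_{t\to\infty}$ of both sides to conclude $\overline{W}\leq|\script{B}|/\lambda_{min}$.

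The step I expect to require the most care is the first inequality: one must justify that under LIFO a packet never vacates and then re-enters a buffer location of $\script{B}$, so that its total contribution to $\sum_\tau L(\tau)$ equals its delay $W_i$ exactly, and one must line up the discrete-time bookkeeping — how, within a slot, arrivals are placed and served packets are removed, per the dynamics in (\ref{eq:queuedynamic}) — so that the counted interval length agrees with the definition of $W_i$ up to harmless $O(1)$ boundary terms that vanish after dividing by $D(t)$. The remaining steps are routine.
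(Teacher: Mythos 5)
Your proposal is correct and follows essentially the same route as the paper: the key inequality bounding $\sum_{i=1}^{D(t)}W_i$ by the time-integral of the occupancy of $\script{B}$ (hence by $|\script{B}|t$), combined with the lower bound $D(t)\geq N(t)-|\script{B}|$, is exactly the paper's argument. Your global (job, slot) incidence count is a slightly more streamlined bookkeeping than the paper's per-buffer-location decomposition (which separately handles locations with only finitely many departures via a finite correction term), but the underlying idea is identical.
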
 
\begin{proof}
See Appendix A. 
\end{proof}

\subsection{LIFO-Backpressure Proof}
We now provide the analysis of LIFO-Backpressure. 
To prove our result, we first have the following theorem, which is the first to show that Backpressure (with either FIFO or LIFO) achieves the exact $[O(1/V), O(V)]$ utility-delay tradeoff under a Markovian network state process. It generalizes the $[O(1/V), O(V)]$ performance result of Backpressure in the i.i.d. case in \cite{neelynowbook}. 
\begin{thm}\label{theorem:qla-markovian}
Suppose $S(t)$ is a finite state irreducible and aperiodic Markov chain\footnote{In \cite{huangneely_qlamarkovian}, the theorem is proven under more general Markovian $S(t)$ processes that include the $S(t)$ process assumed here.}  and condition (\ref{eq:slackness}) holds, Backpressure (with either FIFO or LIFO) achieves the following:
\begin{eqnarray}
f_{av}^{BP}=f^*_{av} + O(1/V), \,\, \overline{q}^{BP} = O(V), 
\end{eqnarray}
where $f_{av}^{BP}$ and $\overline{q}^{BP}$ are the expected time average cost and  backlog under Backpressure. 
\end{thm}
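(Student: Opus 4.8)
\noindent\emph{Proof plan.} The plan is to run a Lyapunov drift-plus-penalty argument over a sliding window of $T$ slots, which is the standard device for carrying the i.i.d.\ analysis of Backpressure (cf.\ (\ref{eq:qla_performance})) over to Markovian $S(t)$. First observe that the queueing discipline enters neither the backlog recursion (\ref{eq:queuedynamic}) nor the Backpressure rule (\ref{eq:QLAeq}) (both depend only on $\bv{q}(t)$ and $S(t)$), so the process $\bv{q}(t)$, hence $f^{BP}_{av}$ and $\overline{q}^{BP}$, is identical under FIFO and LIFO, and it suffices to prove the bounds once. Take $L(\bv{q}(t))\defequiv\tfrac12\sum_{j=1}^{r}q_j(t)^2$ and, for a fixed integer $T\geq1$, the conditional $T$-slot drift $\Delta_T(t)\defequiv\expect{L(\bv{q}(t+T))-L(\bv{q}(t))\,|\,\bv{q}(t),S(t)}$. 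Expanding (\ref{eq:queuedynamic}) slot by slot and using the uniform bound $\delta_{max}$ gives, for a constant $B=\Theta(r\delta_{max}^2)$,
\[
\Delta_T(t)+V\!\!\sum_{\tau=t}^{t+T-1}\!\!\expect{f(\tau)\,|\,\bv{q}(t),S(t)}\ \leq\ BT^2+\!\!\sum_{\tau=t}^{t+T-1}\!\!\expect{Vf(\tau)+\textstyle\sum_j q_j(\tau)\big[A_j(\tau)-\mu_j(\tau)\big]\,\big|\,\bv{q}(t),S(t)}.
\]
Since Backpressure picks, at each $\tau$, the action minimizing $Vf(S(\tau),x)+\sum_j q_j(\tau)[A_j(S(\tau),x)-\mu_j(S(\tau),x)]$, the bracketed sum is no larger than the same quantity evaluated under any stationary randomized policy; replacing each $q_j(\tau)$ by $q_j(t)$ (an error of at most $\delta_{max}(\tau-t)\leq\delta_{max}T$ per queue, absorbed into a larger $BT^2$) then lets me pull the backlogs out of the expectation, since such a policy's actions depend on $S(\tau)$ only.

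For the backlog bound I would take the comparison policy to be the $\eta$-slack policy guaranteed by (\ref{eq:slackness}). Its $T$-slot contribution to the right-hand side, conditioned on $S(t)$, equals $T$ times the $\pi$-weighted mean drift (which is $\leq-\eta$ per queue) plus a mixing error of the form $\sum_{\tau=t}^{t+T-1}\sum_{s_i}|\Pr[S(\tau)=s_i\,|\,S(t)]-\pi_{s_i}|\cdot O(\delta_{max})$; because a finite-state irreducible aperiodic chain mixes geometrically, this error is bounded by a constant independent of $T$, $V$, and $t$. Fixing $T$ large enough that this constant is dominated by $\tfrac{\eta T}{2}$, dropping the nonnegative penalty term ($f(\tau)\geq0$), taking full expectations, summing over $t$, and using that the resulting (partially telescoping) sum of $\expect{L(\bv{q}(\cdot))}$ is bounded below by a constant, I obtain $\overline{q}^{BP}\leq\frac{2(BT^2+VT\delta_{max})}{\eta T}=O(V)$.

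For the utility bound I would instead compare against a stationary randomized policy whose $\pi$-weighted cost is within an arbitrarily small $\delta>0$ of $f^*_{av}$ and whose $\pi$-weighted mean drift is $\leq0$ for every queue (such policies approach $f^*_{av}$, the optimum over all stable policies, in the standard framework). Telescoping the single-slot version of the bound over a horizon of length $\mathcal{T}$, taking full expectations, dividing by $V\mathcal{T}$, and letting $\mathcal{T}\to\infty$ and then $\delta\to0$, the geometric convergence of $\Pr[S(t)=\cdot]$ to $\pi$ makes the comparison policy's cost terms contribute only $f^*_{av}+O(1/V)$, leaving a residual term of the form
\[
\limsup_{\mathcal{T}\to\infty}\ \frac{C}{V\mathcal{T}}\sum_{t=0}^{\mathcal{T}-1}\sum_{j,\,s_i}\Big|\,\expect{q_j(t)\,\mathbf{1}[S(t)=s_i]}-\pi_{s_i}\,\expect{q_j(t)}\,\Big|,
\]
which is \emph{identically zero in the i.i.d.\ case} (there $S(t)$ is independent of $\bv{q}(t)$) but must be controlled here; \emph{this backlog--state correlation is where I expect essentially all of the difficulty to lie.} The natural estimate is to condition on the state $k$ slots earlier: $\bv{q}(t)$ differs from $\bv{q}(t-k)$ by at most $\delta_{max}k$, while $S(t)$ given $S(t-k)$ is within geometrically small total variation of $\pi$, so the per-slot correlation is $O(\delta_{max}k+\expect{q_j(t)}\beta^k)$ for any $k\geq1$, with $\beta<1$ the mixing rate. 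Optimizing over $k$ and invoking the already-proved $\overline{q}^{BP}=O(V)$ (with concavity of $\log$) bounds the displayed quantity crudely by $O(\log(V)/V)$ — i.e.\ yields only the weaker $[O(\log(V)/V),O(V)]$ tradeoff noted earlier — so sharpening this correlation bookkeeping to land at the exact $O(1/V)$ is the crux, and is the technical content imported from \cite{huangneely_qlamarkovian} (cited in the footnote to the statement).
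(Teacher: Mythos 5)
The paper itself does not prove this theorem --- its ``proof'' is the single line ``See \cite{huangneely_qlamarkovian}'' --- so your sketch can only be judged on its own merits. The architecture you chose (FIFO/LIFO equivalence of the $\bv{q}(t)$ process, a $T$-slot drift-plus-penalty bound, comparison against the $\eta$-slack randomized policy, geometric mixing of the finite irreducible aperiodic chain) is the standard and correct one, and the parts you carry to completion are sound: the observation that the queueing discipline affects neither (\ref{eq:queuedynamic}) nor (\ref{eq:QLAeq}) is exactly right, and the backlog bound $\overline{q}^{BP}=O(V)$ goes through as you describe, since the mixing error in the $T$-slot conditional drift is a $V$-independent constant that can be dominated by $\eta T/2$ for a fixed, $V$-independent $T$.

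The gap is in the utility bound, and it is a genuine one that you have correctly located but not closed. As you concede, the backlog--state correlation term $\big|\expect{q_j(t)\mathbf{1}[S(t)=s_i]}-\pi_{s_i}\expect{q_j(t)}\big|$, estimated by conditioning $k$ slots back, is $O(\delta_{max}k+\expect{q_j(t)}\beta^k)$; since the only available a priori bound is $\expect{q_j(t)}=O(V)$, the optimal $k=\Theta(\log V)$ leaves an $O(\log V)$ residual per slot and hence only $f^{BP}_{av}\leq f^*_{av}+O(\log(V)/V)$. That recovers the previously known Markovian result (cf.\ the discussion around (\ref{eq:qla_performance})), not the theorem. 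The paper explicitly states that no prior formal proof of the exact $[O(1/V),O(V)]$ tradeoff under Markovian $S(t)$ existed, so the removal of that $\log V$ factor is not bookkeeping --- it is the entire content of the theorem, and it cannot be obtained by tuning $T$ either: a fixed $T$ leaves an $O(\overline{q}/(VT))=O(1/T)$ residual, while growing $T$ inflates the $BT^2/(VT)=O(T/V)$ term, and balancing gives at best $O(1/\sqrt{V})$. Closing this requires a genuinely different device for decoupling $\bv{q}(t)$ from $S(t)$ (this is what \cite{huangneely_qlamarkovian} supplies), and your proposal does not contain it. So the submission should be read as a correct proof of $\overline{q}^{BP}=O(V)$ and of the weaker $O(\log(V)/V)$ utility gap, together with an accurate diagnosis --- but not a proof --- of the stated $O(1/V)$ claim.
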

\begin{proof}
See \cite{huangneely_qlamarkovian}. 
\end{proof}

Theorem \ref{theorem:qla-markovian} thus shows that LIFO-Backpressure guarantees an average backlog of $O(V)$ when pushing the utility to within $O(1/V)$ of the optimal value. 
We now consider the delay performance of LIFO-Backpressure. For our analysis, we need the following theorem (which is Theorem 1 in \cite{huangneely_dr_tac}).
\begin{thm}\label{thm:prob_multi_con}
Suppose that $\bv{\gamma}^*_V$ is unique,  that the slackness condition (\ref{eq:slackness}) holds, and that  the dual function $g(\bv{\gamma})$ satisfies:
\begin{eqnarray}
g(\bv{\gamma}^*_V)\geq g(\bv{\gamma})+L||\bv{\gamma}^*_V-\bv{\gamma}||\quad\forall\,\,\bv{\gamma}\succeq\bv{0}\label{eq:dualpolyhedralcond},
\end{eqnarray}
for some constant $L>0$ independent of $V$. Then under Backpressure with FIFO or LIFO, there exist constants $D, K, c^*=\Theta(1)$, i.e., all independent of $V$, such that for any $m\in \mathbb{R}_+$, 
\begin{eqnarray}
\script{P}^{(r)}(D, Km)&\leq& c^*e^{-m},\label{eq:prob_pmr_special}
\end{eqnarray}
where $\script{P}^{(r)}(D, Km)$ is defined:
\begin{eqnarray}
\hspace{-.3in}&&\script{P}^{(r)}(D, Km)\label{eq:pmr_def}\\
\hspace{-.3in}&&\qquad\quad\triangleq\limsup_{t\rightarrow\infty}\frac{1}{t}\sum_{\tau=0}^{t-1}\prob\{\exists\, j, |q_j(\tau)-\gamma_{Vj}^*|>D+Km\}.\nonumber
\end{eqnarray}
\end{thm}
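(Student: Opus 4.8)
The plan is to prove (\ref{eq:prob_pmr_special}) by establishing that the backlog vector $\bv{q}(t)$ is ``exponentially attracted'' to the dual optimizer $\bv{\gamma}^*_V$, via a Lyapunov drift argument on the distance $\norm{\bv{q}(t)-\bv{\gamma}^*_V}$. Since FIFO and LIFO induce the same backlog process $\bv{q}(t)$ (only the identities of the packets occupying each buffer slot differ), the service discipline plays no role and it suffices to analyze the queue-length process. The structural fact to exploit is that under Backpressure the queues act as Lagrange multipliers: the per-slot problem (\ref{eq:QLAeq}), solved with weights $q_j(t)$, is exactly the minimization that defines the single-state term of $g(\bv{q}(t))$ in (\ref{eq:dual_separable}). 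Hence, if one freezes the weights at $\bv{\gamma}=\bv{q}(t)$ and averages the resulting one-slot change of $\bv{q}$ over $S(t)$ in steady state, the expected drift equals, up to $\Theta(1)$ corrections arising from the $\max[\cdot,0]$ operation and idle-filling in (\ref{eq:queuedynamic}), a subgradient of the concave dual function $g$ evaluated at $\bv{\gamma}=\bv{q}(t)$. Hypothesis (\ref{eq:dualpolyhedralcond}) says $g$ grows at least linearly away from its unique maximizer $\bv{\gamma}^*_V$ with slope $L$; concavity then forces every subgradient at $\bv{\gamma}\neq\bv{\gamma}^*_V$ to have a component of size $\Theta(L)$ pointing back toward $\bv{\gamma}^*_V$. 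This is the ``restoring force'' that drives $\norm{\bv{q}(t)-\bv{\gamma}^*_V}$ downward once it exceeds a threshold $D=\Theta(1)$.

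Because $S(t)$ is Markovian rather than i.i.d., the single-slot drift need not have the correct sign — the steady-state averaging is only valid after the chain has mixed. So I would work over windows of fixed length $T$, chosen as a constant multiple of the mixing time of the chain on $\script{S}$ (hence $\Theta(1)$, independent of $V$), and prove a $T$-slot drift inequality of the form
\begin{equation}
\expect{\norm{\bv{q}(t+T)-\bv{\gamma}^*_V}\,\big|\,\bv{q}(t),\,S(t)} \;\leq\; \norm{\bv{q}(t)-\bv{\gamma}^*_V} - \beta
\label{eq:proposal-Tdrift}
\end{equation}
valid whenever $\norm{\bv{q}(t)-\bv{\gamma}^*_V}>D$, for constants $\beta,D=\Theta(1)$, together with the deterministic increment bound $|\,\norm{\bv{q}(t+T)-\bv{\gamma}^*_V}-\norm{\bv{q}(t)-\bv{\gamma}^*_V}\,|\leq T\delta_{max}\sqrt{r}=\Theta(1)$, which is immediate from (\ref{eq:queuedynamic}). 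Proving (\ref{eq:proposal-Tdrift}) is the crux: one telescopes the one-slot drifts over the window, uses Lipschitz continuity of the optimal Backpressure action in its weight vector to argue that $\bv{q}(\tau)$ moves by only $\Theta(1)$ within the window (so the effective weights stay near $\bv{q}(t)$), compares the empirical frequency of states visited during the window with the stationary distribution $\{\pi_{s_i}\}$, and checks that the accumulated restoring drift of size $\Theta(TL)$ dominates all the $\Theta(1)$ error terms, uniformly in $V$.

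Given (\ref{eq:proposal-Tdrift}) and the bounded increments, the remaining step is routine. One introduces the exponential Lyapunov function $\Phi(t)=e^{w\norm{\bv{q}(t)-\bv{\gamma}^*_V}}$ for a suitably small $w=\Theta(1)$ and converts (\ref{eq:proposal-Tdrift}) into a geometric contraction $\expect{\Phi((n{+}1)T)\,|\,\Phi(nT)}\leq\rho\,\Phi(nT)+B$ with $\rho<1$ and $B=\Theta(1)$ — this is the Hajek-type drift-to-tail lemma also used in \cite{neelysuperfast}. Iterating and taking $\limsup$ bounds the tail of the sampled process $\norm{\bv{q}(nT)-\bv{\gamma}^*_V}$ by $c^*e^{-m}$ at scale $D+Km$ for constants $c^*,K=\Theta(1)$; interpolating over the $T-1$ intra-window slots (again using the $\Theta(1)$ per-slot movement) extends the bound to all $\tau$ after adjusting constants; and a union bound over the $r$ coordinates turns $\norm{\bv{q}(\tau)-\bv{\gamma}^*_V}>D+Km$ into $\{\exists\, j:\,|q_j(\tau)-\gamma^*_{Vj}|>D+Km\}$, the factor $r$ being absorbed into $c^*$. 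Taking the $\limsup$ of the time-average of these probabilities yields (\ref{eq:prob_pmr_special}). I expect the main obstacle to be a fully rigorous proof of the $T$-slot negative drift (\ref{eq:proposal-Tdrift}): the action at each slot is driven by the instantaneous backlog $q_j(\tau)$, which is itself moving during the window, and by a Markovian $S(\tau)$ that is only approximately stationary, so some care is needed to show that the drift genuinely inherits the $\Theta(L)$ restoring force from the polyhedral geometry of $g$ rather than being swamped by these fluctuations.
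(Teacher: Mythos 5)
Your proposal is essentially the same argument as the actual proof: the paper itself gives no proof of this theorem but simply defers to Theorem 1 of \cite{huangneely_dr_tac}, and the argument there is exactly your outline --- the queues-as-Lagrange-multipliers observation that the expected per-slot drift under Backpressure is a subgradient of $g$ at $\bv{\gamma}=\bv{q}(t)$, concavity plus condition (\ref{eq:dualpolyhedralcond}) giving a $\Theta(L)$ restoring component toward the unique $\bv{\gamma}^*_V$ once $\norm{\bv{q}(t)-\bv{\gamma}^*_V}$ exceeds a $\Theta(1)$ threshold, a multi-slot drift window to handle the Markovian $S(t)$, and a Hajek-type exponential Lyapunov/drift-to-tail lemma with bounded increments to obtain the $c^*e^{-m}$ bound. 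Your identification of the $T$-slot negative drift as the technically delicate step is also accurate; that is where the bulk of the work in the cited reference lies.
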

\begin{proof}
See \cite{huangneely_dr_tac}. 
\end{proof}

Note that if a steady state distribution exists for $\bv{q}(t)$, e.g., when all queue sizes are integers, then $\script{P}^{(r)}(D, Km)$ is indeed the steady state probability that there exists a queue $j$ whose queue value deviates from $\gamma^*_{Vj}$ by more than $D+Km$ distance. In this case, Theorem \ref{thm:prob_multi_con} states that $q_j(t)$ deviates from $\gamma_{Vj}^*$ by $\Theta(\log(V))$ distance with probability $O(1/V)$. Hence when $V$ is large, $q_j(t)$ will mostly be within $O(\log(V))$ distance from $\gamma^*_{Vj}$. 
Also note that the conditions of Theorem  \ref{thm:prob_multi_con} are not very restrictive. The condition (\ref{eq:dualpolyhedralcond}) can usually be satisfied in practice when the action space is  finite, in which case the dual function $g(\bv{\gamma})$ is polyhedral (see  \cite{huangneely_dr_tac} for more discussions). The uniqueness of $\bv{\gamma}_V^*$ can usually be satisfied in many network utility optimization problems, e.g., \cite{eryilmaz_qbsc_ton07}. 


We now present the main result of this paper with respect to the delay performance of LIFO-Backpressure. 
Below, the notion ``average arrival rate'' is defined as follows:  Let $A_j(t)$ be the number of packets
entering queue $j$ at time $t$.  Then the time average arrival rate  of these packets is defined (assuming
it exists): 
$\lambda_j = \lim_{t\rightarrow\infty}\frac{1}{t} \sum_{\tau=0}^{t-1}A_j(\tau)$. 
For the theorem, we assume that time averages under Backpressure exist with probability 1. This is a reasonable assumption, and holds whenever the resulting discrete time Markov chain for the queue vector $\bv{q}(t)$ under backpressure is countably infinite and irreducible.  Note that the 
state space is indeed countably infinite if we assume packets take integer units.  If the system is also irreducible then the finite average backlog 
result of Theorem \ref{theorem:qla-markovian} implies that all states are positive recurrent. 

Let $D, K, c^*$ be constants as defined in Theorem \ref{thm:prob_multi_con}, and 
recall that these are $\Theta(1)$ (independent of $V$). 
Assume $V\geq1$, and define $Q_{j, High}$ and $Q_{j, Low}$ as: 
\begin{eqnarray*}
Q_{j, High} &\defequiv&  \gamma_{V j}^* + D + K[\log(V)]^2, \\
Q_{j, Low} &\defequiv& \max[\gamma_{V j}^* - D - K[\log(V)]^2, 0]. 
\end{eqnarray*}
Define the interval 
$\script{B}_j \defequiv [Q_{j, High}, Q_{j, Low}]$.  The following theorem considers
the rate and delay of packets that enter when $q_j(t) \in \script{B}_j$ and that eventually depart. 


\begin{thm} \label{thm:qlalifo_delay}
Suppose that $V\geq1$, that 
$\bv{\gamma}_V^*$ is unique, that the slackness assumption 
(\ref{eq:slackness}) holds, and that the dual function $g(\bv{\gamma})$ satisfies: 
\begin{eqnarray}
g(\bv{\gamma}^*_V)\geq g(\bv{\gamma})+L||\bv{\gamma}^*_V-\bv{\gamma}||\quad\forall\,\,\bv{\gamma}\succeq\bv{0}\label{eq:dualpolyhedralcond2},
\end{eqnarray}
for some constant $L>0$ independent of $V$.  Define $D, K, c^*$ as in Theorem  \ref{thm:prob_multi_con}, and define $\script{B}_j$ as above. 
Then for any queue $j$ with a time average input rate $\lambda_j>0$, 
we have under LIFO-Backpressure that: 

(a) The rate $\tilde{\lambda}_j$ of packets that both arrive to queue $j$ when $q_j(t) \in\script{B}_j$ and that eventually 
depart the queue satisfies: 
\begin{eqnarray}
\lambda_j \geq \tilde{\lambda}_j \geq \left[\lambda_j - \frac{\delta_{max}c^*}{V^{\log(V)}}\right]^+. \label{eq:rate-major}
\end{eqnarray}

(b) The average delay of these packets is at most $W_{bound}$, where: 
\[ W_{bound} \defequiv [2D + 2K[\log(V)]^2+\delta_{max}]/\tilde{\lambda}_j. \]
\end{thm}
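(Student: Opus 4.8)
The plan is to prove part (a) by combining the exponential-attraction bound of Theorem~\ref{thm:prob_multi_con} with an elementary counting argument, and part (b) by applying the modified Little's theorem (Theorem~\ref{theorem:little1}) to a carefully chosen finite set $\script{B}$ of LIFO buffer slots --- in effect, making the picture behind Fig.~\ref{fig:lifo-idea} rigorous. Throughout I fix a queue $j$ with $\lambda_j>0$, and since the stated delay bound is $\infty$ (hence trivially true) when $\tilde{\lambda}_j=0$, I assume $\tilde{\lambda}_j>0$ when proving (b).

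For part (a), I would first invoke Theorem~\ref{thm:prob_multi_con} with $m=[\log(V)]^2$; as $\log$ is the natural log, $e^{-[\log(V)]^2}=V^{-\log(V)}$, so $\script{P}^{(r)}(D,K[\log(V)]^2)\leq c^*/V^{\log(V)}$. Next, for the fixed queue $j$ the event $q_j(\tau)\notin\script{B}_j$ forces $|q_j(\tau)-\gamma^*_{Vj}|>D+K[\log(V)]^2$ (check $q_j(\tau)>Q_{j,High}$ and $0\leq q_j(\tau)<Q_{j,Low}$ separately, the second case arising only when $Q_{j,Low}>0$), so the $\limsup$ time-average of $\prob\{q_j(\tau)\notin\script{B}_j\}$ is at most $c^*/V^{\log(V)}$; since $\bv{q}(\cdot)$ is positive recurrent (Theorem~\ref{theorem:qla-markovian}), this equals the steady-state probability, so the almost-sure long-run fraction of slots with $q_j(\tau)\notin\script{B}_j$ is also at most $c^*/V^{\log(V)}$. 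Since at most $\delta_{max}$ packets enter in any slot, the rate of packets arriving while $q_j(\tau)\notin\script{B}_j$ is at most $\delta_{max}c^*/V^{\log(V)}$. The only other loss relative to $\tilde{\lambda}_j$ is packets arriving while $q_j(\tau)\in\script{B}_j$ that never depart, and this set has rate $0$: under LIFO a never-departing packet holds a fixed slot forever and freezes every packet beneath it, so the total number of never-departing packets is at most $\liminf_{s\to\infty}q_j(s)$, which is finite with probability $1$ because Theorem~\ref{theorem:qla-markovian} gives finite average backlog. Adding the two loss rates and using $0\leq\tilde{\lambda}_j\leq\lambda_j$ yields (\ref{eq:rate-major}); in particular the rate of arrivals with $q_j(\tau)\in\script{B}_j$ is exactly $\tilde{\lambda}_j$.

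For part (b), I would take $\script{B}$ to be the set of buffer slots a packet arriving in a slot with $q_j(t)\in\script{B}_j$ can occupy. Such a packet is placed strictly above the post-service backlog $\max[q_j(t)-\mu_j(t),0]$ and no higher than $q_j(t+1)\leq q_j(t)+\delta_{max}$, and since $\mu_j(t)\leq\delta_{max}$ these slots are integers between $\max[Q_{j,Low}-\delta_{max},0]+1$ and $Q_{j,High}+\delta_{max}$, giving $|\script{B}|\leq Q_{j,High}-Q_{j,Low}+2\delta_{max}=2D+2K[\log(V)]^2+2\delta_{max}$ (the extra $\delta_{max}$ over the quoted bound is a boundary-layer artifact, negligible against $[\log(V)]^2$). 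I then check the hypotheses of Theorem~\ref{theorem:little1}: writing $\lambda_{\script{B}}$ for the rate of arrivals using a slot of $\script{B}$, every packet of interest uses a slot of $\script{B}$, so $\lambda_{\script{B}}\geq\tilde{\lambda}_j>0$ and $\liminf_tN(t)/t=\lambda_{\script{B}}>0$, while $D(t)\to\infty$ since packets of interest arrive at positive rate and each departs from its fixed slot in $\script{B}$; Theorem~\ref{theorem:little1} then bounds the $\limsup$ average delay over all jobs departing from $\script{B}$ by $|\script{B}|/\lambda_{\script{B}}$. To finish, the packets of interest are exactly the departing jobs that also arrived while $q_j(t)\in\script{B}_j$, hence a subset of the jobs departing from $\script{B}$; their departures accumulate at rate $\tilde{\lambda}_j$ (all depart, and the number still in system is $\leq q_j(t)=o(t)$), whereas all departures from $\script{B}$ accumulate at rate $\lambda_{\script{B}}\leq\tilde{\lambda}_j+\delta_{max}c^*/V^{\log(V)}$, so comparing the two running delay sums (the packets-of-interest sum being dominated term by term) gives average delay at most $(\lambda_{\script{B}}/\tilde{\lambda}_j)(|\script{B}|/\lambda_{\script{B}})=|\script{B}|/\tilde{\lambda}_j\leq W_{bound}$.

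The step I expect to be the main obstacle is the buffer-slot bookkeeping in part (b): making the intuitive claim ``packets enter and leave while the backlog lies in $[Q_{j,Low},Q_{j,High}]$'' precise for the LIFO stack --- pinning down which slots the packets of interest occupy, arguing they are never permanently trapped (where positive recurrence/finite backlog and the rate-$0$ frozen-packet observation enter), and carefully accounting for the $O(\delta_{max})$ boundary layers at both ends of $\script{B}_j$ and the small mismatch between ``jobs departing from $\script{B}$'' and ``packets of interest.'' All the genuinely probabilistic input --- the $c^*/V^{\log(V)}$ concentration and positive recurrence --- is imported from Theorems~\ref{thm:prob_multi_con} and~\ref{theorem:qla-markovian}; the rest is deterministic sample-path analysis.
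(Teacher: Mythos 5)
Your proof is correct and follows essentially the same route as the paper: part (a) is Theorem \ref{thm:prob_multi_con} with $m=[\log(V)]^2$ combined with the observation that never-departing packets have rate zero by the finite-backlog result of Theorem \ref{theorem:qla-markovian}, and part (b) applies the modified Little's theorem (Theorem \ref{theorem:little1}) to an enlarged set of buffer slots. The only differences are points where you are slightly more careful than the paper: you extend the slot set by $\delta_{max}$ \emph{below} $Q_{j,Low}$ to catch packets placed after same-slot service (so your additive constant is $2\delta_{max}$ rather than $\delta_{max}$, which is order-irrelevant), and you explicitly handle the fact that Theorem \ref{theorem:little1} bounds the average over \emph{all} departures from $\script{B}$ rather than only over the packets of interest, whereas the paper uses the interval $[Q_{j,Low},Q_{j,High}+\delta_{max}]$ and passes over the subset-averaging step.
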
 

This theorem says that the delay of packets that enter when $q_j(t) \in \script{B}_j$ and that eventually 
depart is at most $O([\log(V)]^2)$.  Further, by (\ref{eq:rate-major}), when $V$ is large, these packets represent the overwhelming majority,
 in that the rate of packets not in this set is at most $O(1/V^{\log(V)})$. 

\begin{proof} (Theorem \ref{thm:qlalifo_delay})   Theorem \ref{theorem:qla-markovian} shows that average queue backlog is finite.  Thus, there can be at most a finite
number of packets that enter the queue and never depart, so the rate of packets arriving that never depart must be $0$.  
It follows that $\tilde{\lambda}_j$ is equal to the rate at 
which packets arrive when $q_j(t) \in \script{B}_j$. 
Define the indicator function $1_j(t)$ to be $1$ if 
$q_j(t) \notin \script{B}_j$, and $0$ else.  Define $\tilde{\lambda}_j^c \defequiv \lambda_j - \tilde{\lambda}_j$. 
Then with probability 1 we get: \footnote{The 
time average expectation is the same as the pure time average by the Lebesgue Dominated Convergence
Theorem, because we assume the pure time average exists with
probability 1, and that $0 \leq A_j(t) \leq \delta_{max} \: \forall t$.} 
\begin{eqnarray*}
 \tilde{\lambda}_j^c &=& \lim_{t\rightarrow\infty} \frac{1}{t}\sum_{\tau=0}^{t-1} A_j(\tau)1_j(\tau) \\
 &=& \lim_{t\rightarrow\infty} \frac{1}{t}\sum_{\tau=0}^{t-1}\expect{A_j(\tau)1_j(\tau)}.  
\end{eqnarray*}
Then using the fact that $A_j(t)\leq\delta_{max}$ for all $j, t$: 
\begin{eqnarray*}
 \expect{A_j(t)1_j(t)} &=& \expect{A_j(t)|  q_j(t) \notin \script{B}_j}\prob\{q_j(t)\notin\script{B}_j) \\
 &\leq& \delta_{max} \prob(q_j(t)\notin[Q_{j,Low}, Q_{j,High}]). 
\end{eqnarray*}
Therefore: 
\begin{eqnarray*}
\tilde{\lambda}_j^c &\leq& \delta_{max}\lim_{t\rightarrow\infty} \frac{1}{t}\sum_{\tau=0}^{t-1}\prob(q_j(\tau) \notin [Q_{j,Low}, Q_{j,High}]) \\
&\leq& \delta_{max} \lim_{t\rightarrow\infty} \frac{1}{t}\sum_{\tau=0}^{t-1}\prob(|q_j(\tau) - \gamma_{V, j}^*| > D + Km), 
\end{eqnarray*}
where we define $m \defequiv [\log(V)]^2$, and note that $m\geq0$ because $V\geq1$. From Theorem \ref{thm:prob_multi_con} we thus have: 
\begin{eqnarray}
0 \leq \tilde{\lambda}_j^c \leq \delta_{max} c^* e^{-m}  = \frac{\delta_{max}c^*}{V^{\log(V)}}.
\end{eqnarray}
This completes the proof of part (a).  
Now define $\tilde{\script{B}_j}= \defequiv [Q_{j, High}+\delta_{max}, Q_{j, Low}]$. Since $\script{B}_j\subset\tilde{\script{B}_j}$, we see that the rate of the packets that enter $\tilde{\script{B}_j}$ is at least $\tilde{\lambda}_j$. 
Part (b) then follows from Theorem \ref{theorem:little1} and the facts that queue $j$ is stable and  that  $|\tilde{\script{B}_j}|\leq2D + 2K[\log(V)]^2+\delta_{max}$. 
\end{proof}

Note that if $\lambda_j=\Theta(1)$, we see from Theorem \ref{thm:qlalifo_delay} that, under LIFO-Backpressure, the time average delay for almost all packets   going through queue $j$ is only $O([\log(V)]^2)$. Applying this argument to all network queues with $\Theta(1)$ input rates, we see that all but a tiny fraction of the traffic entering the network only experiences a delay of $O([\log(V)]^2)$.  This contrasts with the delay performance result of the usual Backpressure with FIFO, which states that the time average delay will be $\Theta(V)$ for all packets \cite{huangneely_dr_tac}.
Also note that under LIFO-Backpressure, some packets may stay in the queue for very long time. This problem can be compensated by introducing certain coding techniques, e.g., fountain codes \cite{mitzenmacher-fountain}, into the LIFO-Backpressure algorithm.



%



\section{Simulation}\label{section:simulation}
In this section, we provide simulation results of the LIFO-Backpressure algorithm. 
We consider the network shown in Fig. \ref{fig:LIFO_net}, where we try to support a  flow sourced by Node $1$ destined for Node $7$ with minimum energy consumption. 

\begin{figure}[cht]
\centering
\includegraphics[height=1.1in, width=2.8in]{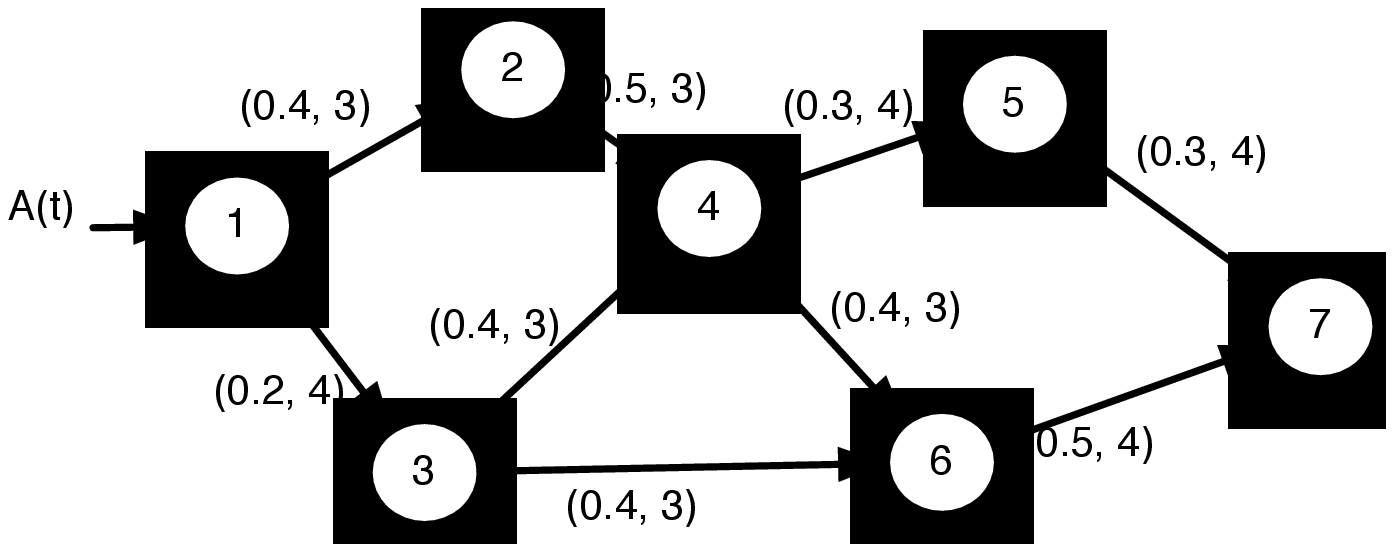}
\caption{A multihop network. $(a, b)$ represents the $HIGH$ probability $a$ and the rate $b$ obtained with one unit of power when $HIGH$.}\label{fig:LIFO_net}
\end{figure}

We  assume that $A(t)$ evolves according to the 2-state Markov chain in Fig. \ref{fig:markov-chain}. When the state is $HIGH$, $A(t)=3$, else $A(t)=0$. 
We assume that the condition of each link can either be $HIGH$ or $LOW$ at a time. 
All the links except link $(2, 4)$ and link $(6, 7)$ are assumed to be i.i.d. every time slot, whereas the conditions of link $(2, 4)$ and link $(6, 7)$ are assumed to be evolving according to independent 2-state Markov chains in Fig. \ref{fig:markov-chain}.
Each link's $HIGH$ probability and unit power rate at the $HIGH$ state is shown in Fig. \ref{fig:LIFO_net}. The unit power rates of the links at the $LOW$ state are all  assumed to be $1$. 
We assume that the link states are all independent and there is no interference. However, each node can only spend one unit of power per slot to transmit over one outgoing link, although it can simultaneously receive from multiple incoming links. The goal is to minimize the time average power while maintaining network stability.

\begin{figure}[cht]
\centering
\includegraphics[height=0.6in, width=1.6in]{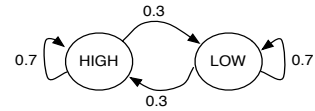}
\caption{The two state Markov chain with the transition probabilities.}\label{fig:markov-chain}
\end{figure}




We simulate Backpressure with both LIFO and FIFO for $10^6$ slots with $V\in\{20, 50, 100, 200, 500\}$. It can be verified that the backlog vector converges to a unique attractor as $V$ increases in this case.  The left two plots in Fig. \ref{fig:LIFO-power-backlog} show the average power consumption and the average backlog under LIFO-Backpressure. It can be observed that the average power quickly converges to the optimal value and that the average backlog grows linearly in $V$. The right plot of Fig. \ref{fig:LIFO-power-backlog} shows the percentage of time when there exists a $q_j$ whose value deviates from $\gamma_{Vj}^*$ by more than $2[\log(V)]^2$. As we can see, this percentage 
is always very small, i.e., between $0.002$ and $0.013$, showing a good match between the theory and the simulation results. 

\begin{figure}[cht]
\centering
\includegraphics[height=1.6in, width=3.3in]{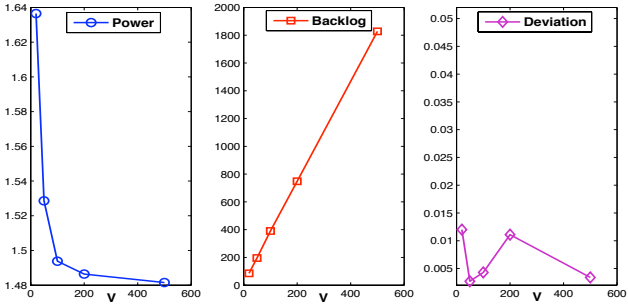}
\caption{LEFT: average network power consumption. MIDDLE: average network backlog size. RIGHT: percentage of time when $\exists\,\,q_j$ such that $|q_j-\gamma^*_{Vj}|>2[\log(V)]^2$.}\label{fig:LIFO-power-backlog}
\end{figure}

Fig. \ref{fig:LIFO_FIFO_stat2}  compares the delay statistics of LIFO and FIFO for more than $99.9\%$ of the packets that leave the system before the simulation ends, under the cases $V=100$ and $V=500$. We see that LIFO not only dramatically reduces the average packet delay for these packets, but also greatly reduces the delay for most of these packets. For instance, when $V=500$, under FIFO, almost all packets experience the average delay around $1220$ slots. Whereas under LIFO, the average packet delay is brought down to $78$. Moreover, $52.9\%$ of the packets only experience delay less than $20$ slots, and $90.4\%$ of the packets experience delay less than $100$ slots. \emph{Hence most packets' delay are reduced by a factor of $12$ under LIFO as compared to that under FIFO! }

\begin{figure}[cht]
\centering
\includegraphics[height=1.1in, width=3.4in]{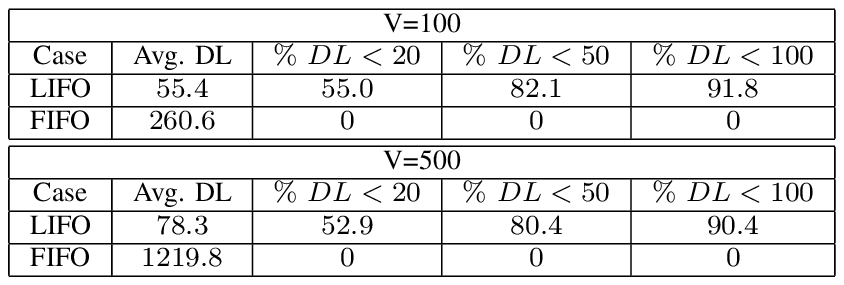}
\caption{Delay statistics under Backpressure with LIFO and FIFO for packets that leave the system before simulation ends (more than $99.9\%$). $\% DL<a$ is the percentage of packets that enter the network and has delay less than $a$. }\label{fig:LIFO_FIFO_stat2}
\end{figure}


Fig.  \ref{fig:LIFO_delay} also shows the delay for the first $20000$ packets that enter the network in the case when $V=500$. We see that under Backpressure plus LIFO, most of the packets experience very small delay; while under Backpressure with FIFO, each packet experiences roughly the average delay.

\begin{figure}[cht]
\centering
\includegraphics[height=2in, width=3.3in]{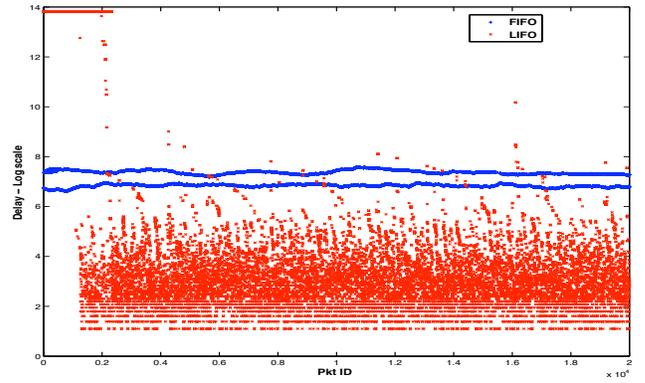}
\caption{Packet Delay under Backpressure with LIFO and FIFO}\label{fig:LIFO_delay}
\end{figure}






\vspace{-.05in}

\section{Empirical Validation}\label{section:experiment}
In this section we validate our analysis empirically by carrying out new experiments over the same testbed and Backpressure Collection Protocol (BCP) code of  \cite{scott_lifo_ipsn}.  This prior work did not empirically evaluate the relationship between $V$, finite storage availability, packet latency and packet discard rate.  We note that BCP runs atop the default CSMA MAC for TinyOS which is not known to be throughput optimal, that the testbed may not precisely be defined by a finite state Markovian evolution, and finally that limited storage availability on real wireless sensor nodes mandates the introduction of virtual queues to maintain backpressure values in the presence of data queue overflows.

In order to avoid using very large data buffers, in \cite{scott_lifo_ipsn} the forwarding queue of BCP has been implemented as a \emph{floating queue}.  The concept of a floating queue is shown in Figure \ref{fig:floatingQueue}, which  operates with a finite data queue of size $D_{max}$ residing atop a virtual queue which preserves backpressure levels. Packets that arrive to a full data queue result in a data queue discard and the incrementing of the underlying virtual queue counter. Underflow events (in which a virtual backlog exists but the data queue is empty) results in null packet generation, which are filtered and then discarded by the destination. 

Despite these real-world differences, we are able to demonstrate clear order-equivalent delay gains due to LIFO usage in BCP in the following experimentation.


\begin{figure}[cht]
\centering
\includegraphics[height=1.2in, width=3.1in]{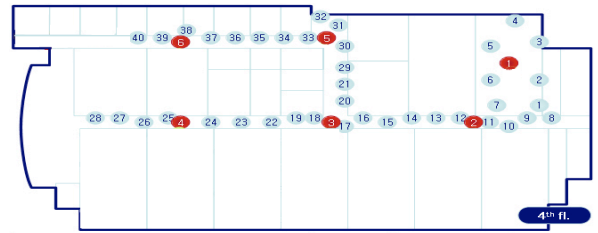}
\caption{The 40 tMote Sky devices used in experimentation on Tutornet.}\label{fig:tutornet}
\end{figure}

\vspace{-.2in}
\subsection{Testbed and General Setup}
To demonstrate the empirical results, we deployed a collection scenario across 40 nodes within the Tutornet testbed (see Figure \ref{fig:tutornet}).  This deployment consisted of Tmote Sky devices embedded in the 4th floor of Ronald Tutor Hall at the University of Southern California.  

In these experiments, one sink mote (ID 1 in Figure \ref{fig:tutornet}) was designated and the remaining 39 motes sourced traffic simultaneously, to be collected at the sink.  The Tmote Sky devices were programmed to operate on 802.15.4 channel 26, selected for the low external interference in this spectrum on Tutornet.  Further, the motes were programmed to transmit at -15 dBm to provide reasonable interconnectivity.  These experimental settings are identical to those used in  \cite{scott_lifo_ipsn}.


\begin{figure}[htbp]
    \centering
\includegraphics[totalheight=0.18\textheight]{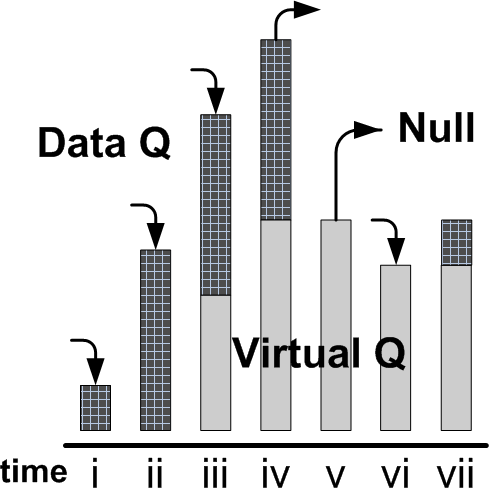}
\caption{\small{The floating LIFO queues of \cite{scott_lifo_ipsn} drop from the data queue during overflow, placing the discards within an underlying virtual queue. Services that cause data queue underflows generate null packets, reducing the virtual queue size.}}
\label{fig:floatingQueue}
\end{figure}

We vary $D_{max}$ over experimentation. In practice, BCP defaults to a $D_{max}$ setting of $12$ packets, the maximum reasonable resource allocation for a packet forwarding queue in these highly constrained devices. 



\subsection{Experiment Parameters}

Experiments consisted of Poisson traffic at 1.0 packets per second per source for a duration of 20 minutes.  This source load  is moderately high, as the boundary of the capacity region for BCP running on this subset of motes has previously been documented at 1.6 packets per second per source \cite{scott_lifo_ipsn}.  A total of 36 experiments were run using the standard BCP LIFO queue mechanism, for all combinations of $V \in \{1,2,3,4,6,8,10,12\}$ and LIFO storage threshold $D_{max} \in \{2,4,8,12\}$.  In order to present a delay baseline for Backpressure we additionally modified the BCP source code and ran experiments with 32-packet FIFO queues (no floating queues) for $V \in \{1,2,3\}$. \footnote{These relatively small $V$ values are due to the constraint that the motes have small data buffers. Using larger $V$ values will cause buffer overflow at the motes. }

\subsection{Results}

Testbed results in Figure \ref{fig:100ppsNullScalingDlyT} provide the system average packet delay from source to sink over $V$ and $D_{max}$, and includes 95\% confidence intervals.  Delay in our FIFO implementation scales linearly with V, as predicted by the analysis in \cite{huangneely_dr_tac}.  This yields an average delay that grows very rapidly with $V$, already greater than 9 seconds per packet for $V=3$. Meanwhile, the LIFO floating queue of BCP performs much differently.  We have plotted a scaled $[\log(V)]^2$ target, and note that as $D_{max}$ increases the average packet delay remains bounded by $\Theta([\log(V)]^2)$.

\begin{figure}[htbp]
\includegraphics[totalheight=0.25\textheight]{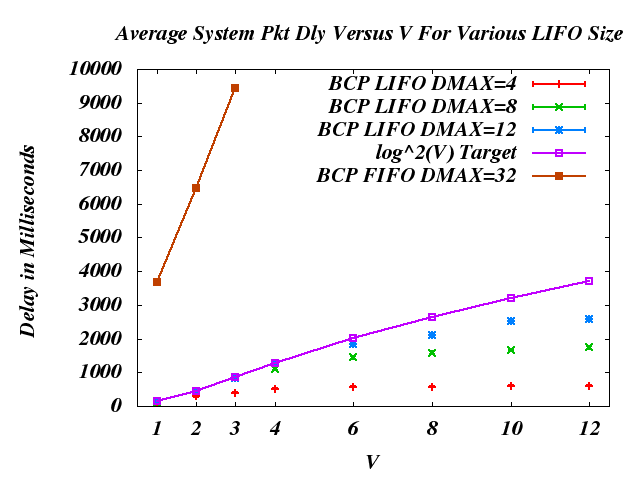}
\caption{\small{System average source to sink packet delay for BCP FIFO versus BCP LIFO implementation over various V parameter settings.}}
\label{fig:100ppsNullScalingDlyT}
\end{figure} 

These delay gains are only possible as a result of discards made by the LIFO floating queue mechanism that occur when the queue size fluctuates beyond the capability of the finite data queue to smooth.  Figure \ref{fig:100ppsNullScalingT} gives the system packet loss rate of BCP's LIFO floating queue mechanism over $V$.  Note that the poly-logarithmic delay performance of Figure \ref{fig:100ppsNullScalingDlyT} is achieved even for data queue size 12, which itself drops at most 5\% of traffic at $V=12$.  
We cannot state conclusively from these results that the drop rate scales like $O(\frac{1}{V^{c_0\log(V)}})$.  We hypothesize that a larger $V$ value would be required in order to observe the predicted drop rate scaling.  Bringing these results back to real-world implications, note that BCP (which minimizes a penalty function of packet retransmissions) performs very poorly with $V=0$, and was found to have minimal penalty improvement for V greater than 2.  At this low V value, BCP's 12-packet forwarding queue demonstrates zero packet drops in the results presented here.  These experiments, combined with those of \cite{scott_lifo_ipsn} suggest strongly that the drop rate scaling may be inconsequential in many real world applications.

\begin{figure}[htbp]
\includegraphics[totalheight=0.25\textheight]{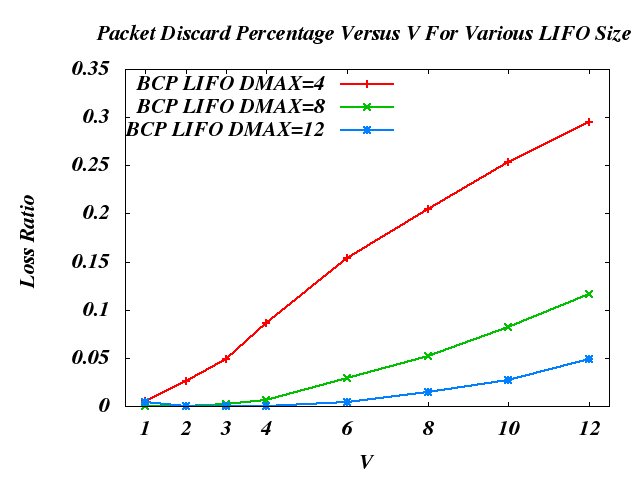}
\caption{\small{System packet loss rate of BCP LIFO implementation over various V parameter settings.}}
\label{fig:100ppsNullScalingT}
\end{figure}

In order to explore the queue backlog characteristics and compare with our analysis, Figure \ref{fig:100ppsQueueHist} presents a histogram of queue backlog frequency for rear-network-node 38 over various V settings.  This node was observed to have the worst queue size fluctuations among all thirty-nine sources.  For $V=2$, the queue backlog is very sharply distributed and fluctuates outside the range $[11-15]$ only 5.92\% of the experiment.  As $V$ is increased, the queue attraction is evident.  For $V=8$ we find that the queue deviates outside the range $[41-54]$ only 5.41\% of the experiment.  The queue deviation is clearly scaling sub-linearly, as a four-fold increase in $V$ required only a 2.8 fold increase in $D_{max}$ for comparable drop performance. 



\begin{figure}[htbp]
\includegraphics[totalheight=0.26\textheight]{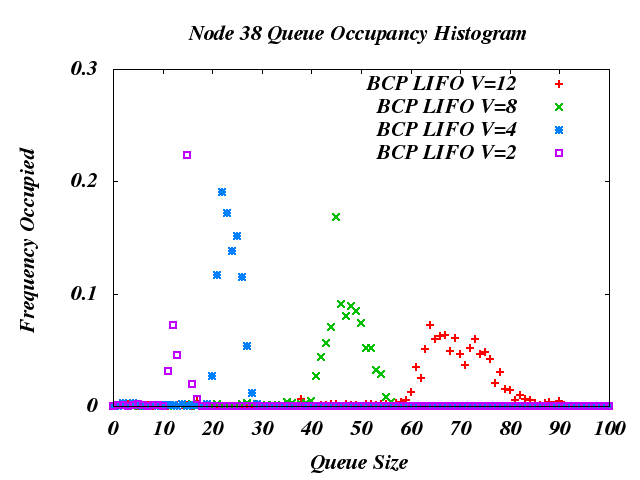}
\caption{\small{Histogram of queue backlog frequency for rear-network-node 38 over various V settings.}}
\label{fig:100ppsQueueHist}
\end{figure}

\vspace{-.2in}
\section{Optimizing Functions of Time averages}\label{section:opt-timeavg}
So far we have focused on optimizing time averages of functions, we now consider the case  when the objective of the network controller is to optimize a function of some time average metric, e.g.,  \cite{neelyfairness}.  
Specifically, we assume that the action $x(t)$ at time $t$ incurs some instantaneous \emph{network attribute vector} $\bv{y}(t)=\bv{y}(x(t))=(y_1(t), ..., y^K(t))^T\in\mathbb{R}_+^K$, and the objective of the network controller is to minimize a cost function $\text{Cost}(\overline{\bv{y}(t)}): \mathbb{R}_+^K\rightarrow\mathbb{R}_+$, \footnote{The case for maximizing a utility  function of long term averages can be treated in a similar way.} where $\overline{\bv{y}(t)}$ represents the time average value of $\bv{y}(t)$. 
We assume that the function $\text{Cost}(\cdot)$ is continuous, convex and is component-wise increasing, and that $|y_k(x(t))|\leq\delta_{max}$ for all $k$, $x(t)$. In this case, we see that the Backpressure algorithm in Section  \ref{section:qlareview}  cannot be directly applied and the deterministic problem (\ref{eq:primal}) also needs to be modified. 

To tackle this problem using the Backpressure algorithm, we introduce an  \emph{auxiliary vector} $\bv{z}(t)=(z_1(t), ..., z_K(t))^T$. 
We then define the following virtual queues $H_k(t), j=1,...,K$ that evolves as follows:
\begin{eqnarray}
H_k(t+1)=\max\big[ H_k(t) - y_k(t), 0 \big] + z_k(t). \label{eq:vq_def}
\end{eqnarray}
These virtual queues are introduced for ensuring that the average value of $y_k(t)$ is no less than the average value of $z_k(t)$. 
We will then try to maximize the time average of the function $\text{Cost}(\bv{z}(t))$, subject to the constraint that the actual queues $q_j(t)$ and the virtual queues $H_k(t)$ must all be stable. Specifically, the Backpressure algorithm for this problem works as follows: 


\underline{\emph{Backpressure:}} At every time slot $t$, observe the current network state $S(t)$, and the backlogs $\bv{q}(t)$ and $\bv{H}(t)$. If $S(t)=s_i$, do the following: 
\begin{enumerate}
\item \underline{Auxiliary vector:} choose the vector $\bv{z}(t)$ by solving: 
\begin{eqnarray}
\hspace{-.3in}\min: && V\text{Cost}(\bv{z})+\sum_{k}H_k(t)z_k\nonumber\\
\hspace{-.3in}s.t. && 0\leq z_k\leq\delta_{max}.\label{eq:qla-auxi}
\end{eqnarray}

\item \underline{Action:} choose the action $x(t)\in\script{X}^{(s_i)}$ that solves:
\begin{eqnarray}
\hspace{-.5in}&&\max: \,\, \sum_{k}H_k(t)y_k(x)+\sum_jq_j(t)[\mu_j(s_i, x)- A_j(s_i, x)]\nonumber\\
\hspace{-.5in}&&\quad s.t. \,\,\,\, x\in\script{X}^{(s_i)}.\label{eq:qla-actual}
\end{eqnarray}
\end{enumerate}
In this case, one can also show that this Backpressure algorithm achieves the $[O(1/V), O(V)]$ utility-delay tradeoff under a Markovian $S(t)$ process. 
We also note that in this case, the deterministic problem is slightly different. Indeed, the intuitively formulation will be of the following form: 
\begin{eqnarray}
\min:&&\script{F}(\bv{x})\triangleq V\text{Cost}(\sum_{s_i}\pi_{s_i}\bv{y}(x^{(s_i)}))\label{eq:primal_func}\\
s.t.&&\script{A}_j(\bv{x})\triangleq\sum_{s_i}\pi_{s_i}A_j(s_i, x^{(s_i)})\nonumber\\
&&\qquad\qquad\quad\leq \script{B}_j(\bv{x})\triangleq\sum_{s_i}\pi_{s_i}\mu_j(s_i, x^{(s_i)})\quad\forall\, j\nonumber\\
&& x^{(s_i)}\in \script{X}^{(s_i)}\quad \forall\, i=1, 2, ..., M.\nonumber
\end{eqnarray}
However, the dual problem of this optimization problem is not separable, i.e., not of the form of (\ref{eq:dual_separable}), unless the function $\text{Cost}(\cdot)$ is linear or if there exists an optimal action that is in every feasible action set $\script{X}^{(s_i)}$, e.g.,  \cite{neelyfairness}. 
To get rid of this problem, we introduce the auxiliary vector $\bv{z}=(z_1, ..., z_K)^T$ and change the problem to:
\begin{eqnarray}
\min:&&\script{F}(\bv{x})\triangleq V\text{Cost}(\bv{z})\label{eq:primal_func_2}\\
s.t. &&  \qquad\bv{z}\preceq \sum_{s_i}\pi_{s_i}\bv{y}(x^{(s_i)})\nonumber\\
&&\script{A}_j(\bv{x})\triangleq\sum_{s_i}\pi_{s_i}A_j(s_i, x^{(s_i)})\nonumber\\
&&\qquad\qquad\quad\leq \script{B}_j(\bv{x})\triangleq\sum_{s_i}\pi_{s_i}\mu_j(s_i, x^{(s_i)})\quad\forall\, j\nonumber\\
&& x^{(s_i)}\in \script{X}^{(s_i)}\quad \forall\, i=1, 2, ..., M.\nonumber
\end{eqnarray}
It can be shown that this modified problem is equivalent to (\ref{eq:primal_func}). 
Now we see that it is indeed due to the non-separable feature of (\ref{eq:primal_func}) that we need to introduce the auxiliary vector $\bv{z}(t)$ in the Backpressure problem. 
We also  note that the problem (\ref{eq:primal_func_2}) actually has the form of (\ref{eq:primal}). Therefore, all previous results on (\ref{eq:primal}), e.g., Theorem \ref{thm:prob_multi_con} and \ref{thm:qlalifo_delay} will also apply to problem (\ref{eq:primal_func_2}). 

\vspace{-.2in}
\section*{Appendix A -- Proof of \ref{theorem:little1} }
Here we provide the proof of Theorem \ref{theorem:little1}. 
\begin{proof} 
Consider a sample path for which the $\liminf$ arrival rate is at least $\lambda_{min}$ and for which we have an infinite
number of departures (this happens with probability 1 by assumption).  
There must be a non-empty subset of $\script{B}$ consisting of buffer locations that experience an infinite number of departures. Call this subset $\tilde{\script{B}}$.  Now let $W^{(b)}_i$ be the delay of the $i^{th}$ departure from $b$, 
let $D^{(b)}(t)$ denote the number of departures from a buffer slot $b\in\tilde{\script{B}}$ up to time $t$, and use $Q^{(b)}(t)$ to denote the occupancy of the buffer slot $b$ at time $t$. Note that $Q^{(b)}(t)$ is either $0$ or $1$. 
For all $t\geq0$, it can be shown that: 
\begin{eqnarray}
\sum_{i=1}^{D^{(b)}(t)} W_i^{(b)} \leq \int_0^t Q^{(b)}(\tau)d\tau. \label{eq:Little-sample-path}
\end{eqnarray}
This can be seen from Fig. \ref{fig:littles-thm-timeline} below. 
\begin{figure}[htbp]
   \centering
   \includegraphics[height=.6in, width=2.6in]{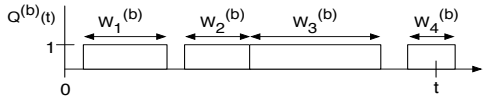} 
   \caption{An illustration of inequality (\ref{eq:Little-sample-path}) for a particular buffer location $b$.  
   At time $t$ in the figure, we have $D^{(b)}(t) = 3$.}
   \label{fig:littles-thm-timeline}
\end{figure}
\vspace{-.1in}

Therefore, we have:
\begin{eqnarray} 
 \sum_{b \in \tilde{\script{B}}} \sum_{i=1}^{D^{(b)}(t)} W_i^{(b)} &\leq&\int_0^{t}  \sum_{i\in\tilde{\script{B}}}Q^{(b)}(\tau)d\tau \nonumber\\
 &\leq& \int_0^{t} |\tilde{\script{B}}|d\tau \nonumber\\
 &\leq& |\script{B}|t.  \label{eq:lp1} 
 \end{eqnarray} 
 
The left-hand-side of the above inequality is equal to the sum of all delays of jobs that depart from locations in $\tilde{\script{B}}$ 
up to time $t$.  All other buffer locations (in $\script{B}$ but not in $\tilde{\script{B}}$)
experience only a finite number of departures.  Let $\script{J}$ represent an
index set that indexes all of the (finite number) of jobs that depart from these other locations. Note that the delay $W_j$
for each job $j\in\script{J}$ is finite (because, by definition, job $j$ eventually departs).   We thus have: 
\[ \sum_{i=1}^{D(t)} W_i \leq \sum_{b \in \tilde{\script{B}}} \sum_{i=1}^{D^{(b)}(t)} W_i^{(b)} + \sum_{j\in\script{J}} W_j. \]
where the inequality is because the second term on the right-hand-side 
sums over jobs in $\script{J}$, and these jobs 
may not have departed by time $t$. 
Combining the above and (\ref{eq:lp1}) yields for all $t\geq0$: 
\[  \sum_{i=1}^{D(t)} W_i \leq |\script{B}|t + \sum_{j\in\script{J}} W_j.   \]
Dividing by $D(t)$ yields: 
\[ \frac{1}{D(t)}\sum_{i=1}^{D(t)} W_i \leq \frac{|\script{B}|t}{D(t)} + \frac{1}{D(t)}\sum_{j\in\script{J}} W_j.  \]
Taking a $\limsup$ as $t\rightarrow\infty$ yields: 
\begin{equation} \label{eq:lp2} 
\limsup_{t\rightarrow\infty} \frac{1}{D(t)}\sum_{i=1}^{D(t)} W_i  \leq \limsup_{t\rightarrow\infty}  \frac{|\script{B}|t}{D(t)},    
\end{equation} 
where we have used the fact that $\sum_{j\in\script{J}} W_j$ is a finite number, and $D(t)\rightarrow\infty$ as $t\rightarrow\infty$, so that: 
\vspace{-.05in}
\[ \limsup_{t\rightarrow\infty} \frac{1}{D(t)}\sum_{j\in\script{J}} W_j  = 0.  \]

Now note that, because each buffer location in $\script{B}$ can hold at most one job, the number of departures $D(t)$ is 
at least $N(t) - |\script{B}|$, which is a positive number for sufficiently large $t$. Thus: 
\begin{eqnarray*} 
\limsup_{t\rightarrow\infty} \frac{|\script{B}|t}{D(t)} &\leq& \limsup_{t\rightarrow\infty} \left[\frac{|\script{B}|t}{N(t) - |\script{B}|}\right]  \\
&=& \limsup_{t\rightarrow\infty} \left[\frac{|\script{B}|}{N(t)/t - |\script{B}|/t}\right] \\
&\leq& |\script{B}|/\lambda_{min}. 
\end{eqnarray*}
Using this in (\ref{eq:lp2}) proves the result.  
\end{proof}

$\vspace{-.2in}$
\bibliographystyle{unsrt}
\bibliography{mybib}

\begin{thebibliography}{10}

\bibitem{tassiulas92}
L.~Tassiulas and A.~Ephremides.
\newblock Stability properties of constrained queueing systems and scheduling
  policies for maximum throughput in multihop radio networks.
\newblock {\em IEEE Trans. on Automatic Control, vol. 37, no. 12, pp.
  1936-1949}, Dec. 1992.

\bibitem{eryilmaz_qbsc_ton07}
A.~Eryilmaz and R.~Srikant.
\newblock Fair resource allocation in wireless networks using
  queue-length-based scheduling and congestion control.
\newblock {\em IEEE/ACM Trans. Netw.}, 15(6):1333--1344, 2007.

\bibitem{huangneelypricing}
L.~Huang and M.~J. Neely.
\newblock The optimality of two prices: Maximizing revenue in a stochastic
  network.
\newblock {\em Proc. of 45th Annual Allerton Conference on Communication,
  Control, and Computing (invited paper)}, Sept. 2007.

\bibitem{rahulneelycognitive}
R.~Urgaonkar and M.~J. Neely.
\newblock Opportunistic scheduling with reliability guarantees in cognitive
  radio networks.
\newblock {\em IEEE INFOCOM Proceedings}, April 2008.

\bibitem{yichiang_netopt08}
Y.~Yi and M.~Chiang.
\newblock Stochastic network utility maximization: A tribute to {K}elly's paper
  published in this journal a decade ago.
\newblock {\em European Transactions on Telecommunications}, vol. 19, no. 4,
  pp. 421-442, June 2008.

\bibitem{neelynowbook}
L.~Georgiadis, M.~J. Neely, and L.~Tassiulas.
\newblock {\em Resource Allocation and Cross-Layer Control in Wireless
  Networks}.
\newblock Foundations and Trends in Networking Vol. 1, no. 1, pp. 1-144, 2006.

\bibitem{neelysuperfast}
M.~J. Neely.
\newblock Super-fast delay tradeoffs for utility optimal fair scheduling in
  wireless networks.
\newblock {\em IEEE Journal on Selected Areas in Communications (JSAC), Special
  Issue on Nonlinear Optimization of Communication Systems}, 24(8), Aug. 2006.

\bibitem{neelyenergydelay}
M.~J. Neely.
\newblock Optimal energy and delay tradeoffs for multi-user wireless downlinks.
\newblock {\em IEEE Transactions on Information Theory vol. 53, no. 9, pp.
  3095-3113}, Sept. 2007.

\bibitem{huangneely_dr_tac}
L.~Huang and M.~J. Neely.
\newblock Delay reduction via {L}agrange multipliers in stochastic network
  optimization.
\newblock {\em IEEE Transactions on Automatic Control}, to appear.

\bibitem{scott_lifo_ipsn}
S.~Moeller, A.~Sridharan, B.~Krishnamachari, and O.~Gnawali.
\newblock Routing without routes: The backpressure collection protocol.
\newblock {\em 9th ACM/IEEE International Conference on Information Processing
  in Sensor Networks (IPSN)}, 2010.

\bibitem{athanasopoulouTON10}
E.~Athanasopoulou, L.~X. Bui, T.~Ji, R.~Srikant, and A.~Stolyar.
\newblock Backpressure-based packet-by-packet adaptive routing in communication
  networks.
\newblock {\em arXiv:1005.4984v1}, May 2010.

\bibitem{buimulticast}
L~Bui, R~Srikant, and A~Stolyar.
\newblock Optimal resource allocation for multicast sessions in multi-hop
  wireless networks.
\newblock {\em Philosophical Transactions of The Royal Society, Series A},
  pages 2059--2074, Jan 2008.

\bibitem{buisrikant_infocom09}
L.~Bui, R.~Srikant, and A.~Stolyar.
\newblock Novel architectures and algorithms for delay reduction in
  back-pressure scheduling and routing.
\newblock {\em Proceedings of IEEE INFOCOM 2009 Mini-Conference}, April 2009.

\bibitem{bertsekas_datanet}
D.~P. Bertsekas and R.~G. Gallager.
\newblock {\em Data Networks}.
\newblock Prentice Hall, 1992.

\bibitem{neelyfairness}
M.~J. Neely, E.~Modiano, and C.~Li.
\newblock Fairness and optimal stochastic control for heterogeneous networks.
\newblock {\em IEEE INFOCOM Proceedings}, March 2005.

\bibitem{neely_queuestability10}
M.~J. Neely.
\newblock Stability and capacity regions or discrete time queueing networks.
\newblock {\em arXiv:1003.3396v1}, March 2010.

\bibitem{huangneely_qlamarkovian}
L.~Huang and M.~J. Neely.
\newblock Max-weight achieves the exact $[{O(1/V}), {O(V})]$ utility-delay
  tradeoff under {M}arkov dynamics.
\newblock {\em arXiv:1008.0200v1}, 2010.

\bibitem{mitzenmacher-fountain}
M.~Mitzenmacher.
\newblock Digital fountains: A survey and look forward.
\newblock {\em IEEE Information Theory Workshop (ITW), San Antonio, Texas},
  Oct, 2004.

\end{thebibliography}

\end{document}